\newtheorem{theorem}{Theorem}[section]
\newtheorem{lemma}[theorem]{Lemma}
\theoremstyle{definition}
\title{\textbf{$K_4$-free character graphs with seven vertices}}
\author{Mahdi Ebrahimi\footnote{ m.ebrahimi.math@ipm.ir}
 \\
 {\small\em  School of Mathematics, Institute for Research in Fundamental Sciences (IPM)},\\{\small\em P.O. Box: 19395--5746, Tehran, Iran}}
\date{}
\begin{document}

\maketitle

\begin{abstract}
For a finite group $G$, let $\Delta(G)$ denote the character graph built on the set of degrees of the irreducible complex characters of $G$. In this paper,
 we determine the structure of all
 finite groups $G$ with $K_4$-free character graph $\Delta(G)$ having seven vertices.
  We also obtain a classification of all $K_4$-free graphs with seven vertices which can occur as character graphs of some finite groups.

 \end{abstract}
\noindent {\bf{Keywords:}}  Character graph, Character degree, Simple group. \\
\noindent {\bf AMS Subject Classification Number:}  20C15, 05C25.

\section{Introduction}
$\noindent$ Let $G$ be a finite group and $R(G)$ be the solvable radical of $G$. Also let ${\rm cd}(G)$ be the set of all character degrees of $G$, that is,
 ${\rm cd}(G)=\{\chi(1)|\;\chi \in {\rm Irr}(G)\} $, where ${\rm Irr}(G)$ is the set of all complex irreducible characters of $G$. The set of prime divisors of character degrees of $G$ is denoted by $\rho(G)$. It is well known that the
 character degree set ${\rm cd}(G)$ may be used to provide information on the structure of the group $G$. For example, Ito-Michler's Theorem \cite{[C]} states that if a prime $p$ divides no character degree of a finite group $G$, then $G$ has a
 normal abelian Sylow $p$-subgroup. Another result due to Thompson \cite{[D]} says that if a prime $p$ divides
 every non-linear character degree of a group $G$, then $G$ has a normal $p$-complement.

A useful way to study the character degree set of a finite group $G$ is to associate a graph to ${\rm cd}(G)$.
One of these graphs is the character graph $\Delta(G)$ of $G$ \cite{[I]}. Its vertex set is $\rho(G)$ and two vertices $p$ and $q$ are joined by an edge if the product $pq$ divides some character degree of $G$. We refer the readers to a survey by Lewis \cite{[M]} for results concerning this graph and related topics.

 If we know the structure of $\Delta(G)$, we can often say a lot about the structure of the group $G$. For instance, Casolo et al. \cite{[SDM]} has proved that if for a finite solvable group $G$,  $\Delta(G)$ is connected with diameter $3$, then there exists a prime $p$ such that $G=PH$, with $P$ a normal non-abelian Sylow $p$-subgroup of $G$ and $H$ a $p$-complement. For another instance, all finite solvable groups $G$ whose character graph $\Delta(G)$ is disconnected have been completely classified by Lewis \cite{los}. In \cite{[AT]}, it was shown that when the character graph $\Delta(G)$ of a finite group $G$ is $K_4$-free, then $\Delta(G)$ has at most $7$ vertices.
 In this paper, we wish to determine the structure of all finite groups $G$ with $K_4$-free character graph $\Delta(G)$ having seven vertices.

  The solvable group $G$ is said to be disconnected if $\Delta(G)$ is disconnected. Also $G$ is called of disconnected Type $n$ if $G$ satisfies the hypotheses of Example $2.n$ in \cite{los}. Now let $\Gamma$ be a finite graph. If $\Gamma_1,\Gamma_2,...,\Gamma_n$ are connected components of $\Gamma$, we use the notation $\Gamma=\Gamma_1\cup \Gamma_2\cup ...\cup \Gamma_n$ to determine the connected components of $\Gamma$. Also note that for an integer $n\geqslant 1$,  the set of prime divisors of $n$ is denoted by $\pi(n)$. Now we are ready to state the main result of this paper.\\

\noindent \textbf{Main Theorem.}  \textit{Let $G$ be a finite group and $\Delta(G)$ be a $K_4$-free graph with seven vertices. Then For some integer $f\geqslant 2$, $G\cong \rm{PSL}_2(2^f)\times R(G)$, where $|\pi(2^f\pm 1)|=1,2$ or $3$. Also\\
i) If $|\pi(2^f\pm 1)|=1$, then for some disconnected groups $A$ and $B$ of disconnected Types 1 or 4, $R(G)\cong A\times B$ and $\Delta(G)\cong K^c_3\star C_4$.\\
ii) If $|\pi(2^f\pm 1)|=2$, then  $R(G)$ is a disconnected group of disconnected Type 1 or 4 and $\Delta(G)\cong (K_2\cup K_1\cup K_2)\star K_2^c$.\\
iii) If $|\pi(2^f\pm 1)|=3$, then  $R(G)$ is abelian and $\Delta(G)\cong K_3\cup K_1\cup K_3$.}

\section{Preliminaries}
$\noindent$ In this paper, all groups are assumed to be finite and all
graphs are simple and finite. For a finite group $G$, the set of prime divisors of $|G|$ is denoted by $\pi(G)$.
If $H\leqslant G$ and $\theta \in \rm{Irr}(H)$, we denote by $\rm{Irr}(G|\theta)$ the set of irreducible characters of $G$ lying over $\theta$ and define $\rm{cd}(G|\theta):=\{\chi(1)|\,\chi \in \rm{Irr}(G|\theta)\}$. We frequently use, Clifford's Theorem which can be found as Theorem 6.11 of \cite{[isa]} and Gallagher's Theorem which is corollary 6.17 of \cite{[isa]}. Also if $N\lhd G$ and $\theta\in \rm{Irr}(N)$, the inertia subgroup of $\theta$ in $G$ is denoted by  $I_G(\theta)$. We begin with Corollary 11.29 of \cite{[isa]}.

\begin{lemma}\label{fraction}
Let $ N \lhd G$ and $\varphi \in \rm{Irr}(N)$. Then for every $\chi \in \rm{Irr}(G|\varphi)$, $\chi(1)/\varphi(1)$ divides $[G:N]$.
\end{lemma}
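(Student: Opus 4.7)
The plan is the standard Clifford-theoretic reduction followed by an appeal to projective representations. First, I would let $T := I_G(\varphi)$ denote the inertia subgroup of $\varphi$ in $G$, and invoke the Clifford correspondence: induction gives a bijection $\psi \mapsto \psi^G$ from $\rm{Irr}(T|\varphi)$ onto $\rm{Irr}(G|\varphi)$, and if $\psi$ corresponds to $\chi$ then $\chi(1) = [G:T]\,\psi(1)$. Because $[G:N] = [G:T]\cdot[T:N]$, the desired divisibility $\chi(1)/\varphi(1) \mid [G:N]$ reduces to the analogous statement $\psi(1)/\varphi(1) \mid [T:N]$ inside $T$. Replacing $G$ by $T$ and $\chi$ by $\psi$, I may therefore assume from now on that $\varphi$ is $G$-invariant.

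With $\varphi$ now $G$-invariant, Clifford's theorem forces $\chi_N = e\varphi$ for some positive integer $e = \chi(1)/\varphi(1)$, and the task becomes showing $e \mid [G:N]$. The cleanest handling is via the observation that $\varphi$ extends to a projective representation of $G$ whose restriction to $N$ is an ordinary representation affording $\varphi$; the characters in $\rm{Irr}(G|\varphi)$ are then in degree-preserving correspondence (up to the factor $\varphi(1)$) with irreducible $\alpha$-projective representations of the quotient $G/N$, for a suitable cocycle $\alpha$. Equivalently, one may invoke a character triple isomorphism $(G,N,\varphi) \to (\Gamma, M, \mu)$ with $M \leq Z(\Gamma)$, $\mu$ linear, $[\Gamma:M]=[G:N]$, and $\chi(1)/\varphi(1)$ matched with $\chi^*(1)$ for the corresponding $\chi^* \in \rm{Irr}(\Gamma|\mu)$.

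In the reduced central--linear situation, $M$ is a normal abelian subgroup of $\Gamma$, so It\^o's theorem immediately yields $\chi^*(1) \mid [\Gamma:M]$ for every $\chi^* \in \rm{Irr}(\Gamma)$, and in particular for those over $\mu$; transporting back along the isomorphism gives $e \mid [G:N]$ and finishes the argument. The main technical point is the construction of the character triple isomorphism sending an arbitrary invariant triple to a central, linear one (equivalently, the factor-set bookkeeping in the projective-representation formulation); this is the principal development of Chapter 11 of \cite{[isa]} and is precisely where Corollary 11.29 lives, so we may invoke the result directly rather than reconstruct the machinery here.
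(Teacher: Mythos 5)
Your argument is correct and is exactly the standard proof of this statement: the paper gives no proof at all, simply quoting it as Corollary 11.29 of Isaacs, and your reduction via the Clifford correspondence to the invariant case followed by a character triple isomorphism onto a central--linear triple and It\^o's theorem is precisely the argument behind that corollary. Nothing further is needed.
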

\begin{lemma}\label{good}\cite{[Ton]}
Let $N$ be a normal subgroup of a group $G$ so that $G/N\cong S$, where $S$ is a non-abelian simple group. Let $\theta \in \rm{Irr}(N)$. Then either $\chi (1)/\theta(1)$ is divisible by two distinct primes in $\pi(G/N)$ for some $\chi \in \rm{Irr}(G|\theta)$ or $\theta$ is extendible to $\theta_0\in \rm{Irr}(G)$ and $G/N\cong A_5$ or $\rm{PSL}_2(8)$.
\end{lemma}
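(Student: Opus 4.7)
The plan is to prove the contrapositive: assuming every $\chi \in \mathrm{Irr}(G|\theta)$ has $\chi(1)/\theta(1)$ divisible by at most one prime of $\pi(G/N)$, I want to conclude both that $\theta$ extends to some $\theta_0 \in \mathrm{Irr}(G)$ and that $S \cong A_5$ or $\mathrm{PSL}_2(8)$. Since Lemma~\ref{fraction} forces $\chi(1)/\theta(1)$ to divide $[G:N] = |S|$, the hypothesis simply says that each such quotient is a prime power.

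The first step is to reduce to $\theta$ being $G$-invariant. Let $T = I_G(\theta)$. Clifford's Theorem writes every $\chi \in \mathrm{Irr}(G|\theta)$ as $\psi^G$ for some $\psi \in \mathrm{Irr}(T|\theta)$, so $[S:T/N] = [G:T]$ divides $\chi(1)/\theta(1)$. If $T < G$, then $S$ must possess a proper subgroup of prime-power index; by Guralnick's CFSG-based classification the pair $(S,T/N)$ lies on a short list (alternating $A_n$ with $n$ a prime power, $\mathrm{PSL}_n(q)$ acting on projective points, and a handful of small/sporadic exceptions). For each entry of this list I would construct, via the Clifford correspondence together with the explicit character tables of $T/N$ and the relevant covers, a $\psi \in \mathrm{Irr}(T|\theta)$ whose quotient $\psi(1)/\theta(1)$ is divisible by a prime of $\pi(S)$ coprime to $[S:T/N]$. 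This injects a second prime into $\chi(1)/\theta(1)$, contradicting the standing hypothesis, so $\theta$ must be $G$-invariant.

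With $\theta$ $G$-invariant I would invoke the theory of character triples. If $\theta$ extends to $\theta_0 \in \mathrm{Irr}(G)$, Gallagher's Theorem gives $\mathrm{Irr}(G|\theta) = \{\beta\theta_0 : \beta \in \mathrm{Irr}(S)\}$, so the set of quotients equals $\mathrm{cd}(S)$. The hypothesis now demands every element of $\mathrm{cd}(S)$ be a prime power; a sweep of character degrees of non-abelian simple groups via CFSG and the Atlas isolates exactly $A_5$, with $\mathrm{cd}=\{1,3,4,5\}$, and $\mathrm{PSL}_2(8)$, with $\mathrm{cd}=\{1,7,8,9\}$, giving the stated exceptions. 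If instead $\theta$ fails to extend, I would pass to a projectively equivalent triple $(\hat G,\hat N,\hat\theta)$ with $\hat N \le Z(\hat G)$ and $\hat\theta$ a faithful linear character, so that $\mathrm{Irr}(\hat G|\hat\theta)$ corresponds to the faithful irreducible characters of a non-trivial central extension of $S$ inside a Schur cover. For every non-abelian simple $S$ one then verifies that such a cover contains a faithful character of degree divisible by two primes of $\pi(S)$; notably this holds for $S=A_5$ (where $\mathrm{SL}_2(5)$ has a faithful character of degree $6$) and for $S=\mathrm{PSL}_2(8)$, which is why these two groups emerge as genuine exceptions only when $\theta$ actually extends.

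The chief obstacle is this last, non-extending case: it ultimately demands a case-by-case inspection of the faithful irreducible characters of Schur covers across every family of non-abelian simple groups (alternating, classical, exceptional, sporadic), and I do not see a way to bypass CFSG. The delicate point in the $A_5$ and $\mathrm{PSL}_2(8)$ exceptions is that although $\mathrm{cd}(S)$ contains only prime powers, their Schur covers do supply bi-prime degrees in the faithful layer, so the lemma's exceptional conclusion records precisely that the obstruction to producing such a $\chi$ is overcome exactly when $\theta$ itself extends to $G$.
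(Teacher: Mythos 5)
The paper does not actually prove this lemma: it is imported verbatim from \cite{[Ton]} (Tong-Viet), so there is no internal proof to compare against. Your outline does reproduce the architecture of the proof in that source: Clifford theory plus Guralnick's classification of simple groups with a subgroup of prime-power index to force $\theta$ to be $G$-invariant; Gallagher plus the classification of nonabelian simple groups all of whose character degrees are prime powers (which yields exactly $A_5$ and $\mathrm{PSL}_2(8)$) in the extending case; and character triples together with faithful characters of Schur covers in the non-extending case. As a strategy this is correct.

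However, as written it is a plan rather than a proof: the three CFSG-dependent verifications that carry all of the mathematical content are each announced (``I would construct'', ``one then verifies'') rather than carried out. Two of these deferrals deserve a concrete warning. First, in the non-invariant case it does not suffice to inspect $\mathrm{cd}(T/N)$ for each Guralnick pair $(S,T/N)$: the character $\theta$ need not extend to $T=I_G(\theta)$, so what you must show is that for \emph{every} character triple $(T,N,\theta)$ with $T/N$ on Guralnick's list, some $\psi\in\mathrm{Irr}(T|\theta)$ has $\psi(1)/\theta(1)$ divisible by a prime not dividing $[S:T/N]$; this again passes through covers of the subgroups $T/N$ (which include groups such as $A_{n-1}$, parabolic quotients of $\mathrm{PSL}_d(q)$, $A_5$, $M_{22}$, $M_{10}$), and a counting identity such as $\sum_\psi(\psi(1)/\theta(1))^2=[T:N]$ does not by itself rule out all ratios being powers of the relevant prime. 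Second, your remark that the non-extending analysis ``holds for $S=\mathrm{PSL}_2(8)$'' is vacuous: $\mathrm{PSL}_2(8)$ has trivial Schur multiplier, so a $G$-invariant $\theta$ with $G/N\cong\mathrm{PSL}_2(8)$ always extends and the non-extending case simply cannot occur there; phrasing it as a property of its (nonexistent) proper cover obscures why $\mathrm{PSL}_2(8)$ survives only in the extending branch.
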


\begin{lemma}\label{method}
Suppose $G$ is a finite group, $N\lhd G$, $\theta \in \rm{Irr}(N)$ is $G$-invariant and $P/N$ is a normal Sylow $p$-subgroup of $G/N$. If all Sylow subgroups of $G/P$ are cyclic, then either $\theta$ extends to $G$ or all character degrees in $\rm{cd}(G|\theta)$ are divisible by $p\theta(1)$.
\end{lemma}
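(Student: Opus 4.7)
The plan is to prove the contrapositive: assume some $\chi_0 \in \mathrm{Irr}(G|\theta)$ has $\chi_0(1)/\theta(1)$ coprime to $p$, and deduce that $\theta$ extends to $G$. The argument breaks into three steps: first I would extend $\theta$ from $N$ to $P$ using $\chi_0$; next I would modify that extension so it becomes $G$-invariant; and finally I would extend the resulting $G$-invariant character from $P$ up to $G$.

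For the first step, pick an irreducible constituent $\eta$ of the restriction of $\chi_0$ to $P$. Since $\theta$ is $G$-invariant, Clifford's Theorem forces $\eta$ to lie over $\theta$. Lemma~\ref{fraction} then gives $\eta(1)/\theta(1)\mid[P:N]$, a $p$-power, while Clifford's Theorem applied to $P\lhd G$ gives $\eta(1)\mid\chi_0(1)$. The hypothesis that $\chi_0(1)/\theta(1)$ is coprime to $p$ therefore forces $\eta(1)=\theta(1)$, so $\eta$ is an extension of $\theta$ to $P$. For the second step, Gallagher's Theorem exhibits the extensions of $\theta$ to $P$ as a torsor under the finite abelian $p$-group $A:=\mathrm{Hom}(P/N,\mathbb{C}^{\times})$, and the $G$-conjugation action defines a $1$-cocycle $g\mapsto\lambda_g\in A$ by $\eta^g=\eta\lambda_g$. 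Since $|G/P|$ is coprime to $p$ whereas $|A|$ is a $p$-power, $H^1(G/P,A)=0$ by the standard coprime-order vanishing, so the cocycle is a coboundary; hence some twist $\eta\mu$ of $\eta$ is $G$-invariant.

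For the third step, $G/P$ has all Sylow subgroups cyclic, so it is a $Z$-group and has trivial Schur multiplier by a classical theorem. The cohomological obstruction in $H^2(G/P,\mathbb{C}^{\times})$ to extending the $G$-invariant character $\eta\mu\in\mathrm{Irr}(P)$ up to $G$ accordingly vanishes, producing $\hat\chi\in\mathrm{Irr}(G)$ of degree $\theta(1)$ whose restriction to $N$ is $\theta$, as required. The principal obstacle is the cohomological input of the last two steps; an alternative route, avoiding explicit cohomology, is to invoke Isaacs' Sylow-wise extension criterion directly: $G/N$ is solvable (an extension of the $p$-group $P/N$ by the $Z$-group $G/P$), so $\theta$ extends to $G$ once it extends to the preimage in $G$ of every Sylow subgroup of $G/N$. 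The first step supplies this for $P$, while the other Sylow subgroups of $G/N$ are cyclic, so the classical cyclic-quotient extension theorem handles them.
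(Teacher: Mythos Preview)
Your proposal is correct. The alternative route you sketch at the end is exactly the paper's argument: the paper splits on whether $\theta$ extends to $P$; if it does, the Sylow-wise extension criterion (Isaacs, Corollary~11.31) applies because the Sylow $q$-subgroups of $G/N$ for $q\neq p$ are cyclic, and if it does not, every $\varphi\in\mathrm{Irr}(P|\theta)$ has $p\theta(1)\mid\varphi(1)$ by Lemma~\ref{fraction}, whence the same holds for every $\chi\in\mathrm{Irr}(G|\theta)$ by restricting through $P$. Your Step~1 is simply the contrapositive of this second case. Your main approach, by contrast, bypasses the Sylow-wise criterion and pushes the extension from $P$ to $G$ directly: the $H^1(G/P,A)=0$ argument manufactures a $G$-invariant extension on $P$, and the trivial Schur multiplier of the $Z$-group $G/P$ kills the $H^2$ obstruction to lifting it. This uses more machinery than the paper's one-line appeal to cyclic Sylows, but it makes explicit the two cohomological facts that are hidden inside Corollaries~11.22 and~11.31 of Isaacs, and it would still work even if one did not know (or did not want to verify) that $G/N$ is solvable.
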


\begin{proof}
If $\theta$ extends to $P$, then as all Sylow subgroups of $G/P$ are cyclic,
$\theta$ is extendible to $G$.
Thus we assume that $\theta$ does not extend to $P$.
 Hence using Lemma \ref{fraction}, all character degrees in $\rm{cd}(P|\theta)$ are divisible by $p\theta(1)$.
  Now let $\chi \in \rm{Irr}(G|\theta)$.
   There is $\varphi \in \rm{Irr}(P|\theta)$ such that $\chi \in \rm{Irr}(G|\varphi)$.
    Thus as $\varphi(1) |\; \chi(1)$ , $\chi(1)$ is divisible by $p\theta(1)$.
\end{proof}

Let $\Gamma$ be a graph with vertex set $V(\Gamma)$ and edge set
$E(\Gamma)$. The complement of $\Gamma$ and the induced subgraph of $\Gamma$ on $X\subseteq V(\Gamma)$
 are denoted by $\Gamma^c$ and  $\Gamma[X]$, respectively. If $E(\Gamma)=\emptyset$, $\Gamma$ is called an empty graph.
  Now let $\Delta$ be a graph with vertex set $V(\Delta)$ for which $V(\Delta)\cap V(\Gamma)= \emptyset$. The join $\Gamma \ast \Delta$ of graphs $\Gamma$
and $\Delta$ is the graph $\Gamma \cup \Delta$ together with all
edges joining $V (\Gamma)$ and $V (\Delta)$. We use the notations $K_n$ for a complete graph with $n$ vertices and $C_n$ for a cycle of length $n$. If for some integer $n\geqslant 2$, $\Gamma$ does not contain a copy  of $K_n$ as an induced subgraph, then $\Gamma$ is called a $K_n$-free graph. We now state some relevant results on character graphs
needed in the next sections.

\begin{lemma}\label{pal}
(Palfy's condition) \cite {palfy} Let $G$ be a group and let $\pi \subseteq \rho(G)$.  If $G$ is solvable and $|\pi|\geqslant 3$, then there exist two distinct primes $u, v$ in $\pi$ and $\chi \in \rm{Irr}(G)$ such that $uv | \chi(1)$.
\end{lemma}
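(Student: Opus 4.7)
The plan is to prove Palfy's condition by contradiction: assume there exists a solvable group $G$ and three primes $p,q,r\in\rho(G)$ such that no irreducible character degree is divisible by any of the products $pq$, $pr$, $qr$, and let $G$ be such a counterexample of minimum order. I then aim to derive a contradiction by descending to a chief factor and combining Ito--Michler with Clifford theory.

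First I would pass to a minimal normal subgroup $N$ of $G$. By solvability, $N$ is an elementary abelian $s$-group for some prime $s$. Since $\Delta(G/N)$ is an induced subgraph of $\Delta(G)$, by minimality of $G$ the set $\{p,q,r\}\cap\rho(G/N)$ has cardinality at most $2$; so we may assume, say, $p\notin\rho(G/N)$. Applying Ito--Michler's theorem to $G/N$ yields a normal abelian Sylow $p$-subgroup $P/N$ of $G/N$, and in particular $p\neq s$. Since $p\in\rho(G)$, any $\chi\in\mathrm{Irr}(G)$ with $p\mid\chi(1)$ must lie over a nontrivial character of $N$, so the action of $G$ on $\mathrm{Irr}(N)\setminus\{1_N\}$ carries the essential information about $p$.

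Next I would pick such a nontrivial $\theta\in\mathrm{Irr}(N)$ with $p\mid\chi(1)$ for some $\chi\in\mathrm{Irr}(G|\theta)$, set $T:=I_G(\theta)$, and use Clifford's theorem: $\chi(1)=[G:T]\psi(1)$ for some $\psi\in\mathrm{Irr}(T|\theta)$, so $p$ divides one of the two factors. The goal is to promote this divisibility into an edge joining $p$ with $q$ or $r$ in $\Delta(G)$. Here I would invoke Lemma~\ref{method} applied to the chain $N\lhd P\lhd G$: if $\theta$ fails to extend to $P$, then every degree in $\mathrm{cd}(P|\theta)$ is already divisible by $p\theta(1)$, and multiplying by an appropriate Clifford index (or by a Gallagher twist by a character of $G/P$ divisible by $q$ or $r$) produces a degree divisible by $pq$ or $pr$. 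If instead $\theta$ extends, Gallagher's theorem realises every degree of $G/N$ over $\theta$ inside $\mathrm{cd}(G|\theta)$, and an edge of $\Delta(G/N)$ among $\{q,r\}$ lifts directly.

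The real obstacle is the case where both alternatives fail: $\theta$ extends to $G$ and no edge among $\{q,r\}$ exists in $\Delta(G/N)$, so $\{q,r\}$ is an independent pair in $\Delta(G/N)$. Here one is forced into the hard combinatorial core of Palfy's argument, namely the study of the coprime action of a Hall $\{q,r\}$-subgroup of $G/P$ on the dual $\widehat{N}$. One must show that some $G$-orbit on $\widehat{N}\setminus\{1_N\}$ has length divisible by $qr$, which relies on the structure theory of solvable linear groups (fixed-point-free actions, regular orbits, and the classification of small solvable subgroups of $\mathrm{GL}_n(s)$). This orbit-length divisibility then yields $[G:T]$ divisible by $qr$, so $\chi(1)$ is divisible by $qr$ after all, contradicting the standing assumption and completing the induction.
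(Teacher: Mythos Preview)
The paper does not prove this lemma at all; it is quoted as a known result with a citation to P\'alfy's paper \cite{palfy}, so there is no ``paper's own proof'' to compare against.

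As for your sketch itself, the overall architecture (minimal counterexample, pass to a chief factor, Ito--Michler on $G/N$, Clifford theory) is indeed the standard entry point, but there are two genuine gaps. First, your appeal to Lemma~\ref{method} is not justified: that lemma requires $\theta$ to be $G$-invariant and, crucially, that all Sylow subgroups of $G/P$ be cyclic. Neither hypothesis has been verified, and the cyclicity condition will typically fail in this generality. You can salvage the weaker statement that every degree in $\mathrm{cd}(P|\theta)$ is divisible by $p$ when $\theta$ does not extend to $P$ (since $P/N$ is a $p$-group), but the subsequent claim that ``a Gallagher twist by a character of $G/P$ divisible by $q$ or $r$'' produces a degree divisible by $pq$ or $pr$ presupposes that the relevant $\varphi\in\mathrm{Irr}(P|\theta)$ is $G$-invariant and extends, which you have not arranged.

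Second, and more fundamentally, you yourself identify the ``real obstacle'' as showing that some $G$-orbit on $\widehat{N}\setminus\{1_N\}$ has length divisible by $qr$, and then simply assert that this ``relies on the structure theory of solvable linear groups''. That assertion is exactly the content of P\'alfy's theorem; the reduction you have outlined is the easy part, and the orbit-length statement is where all the work lies (it uses, among other things, delicate information about solvable subgroups of $\mathrm{GL}_n(s)$ and their regular orbits). So what you have written is a reasonable roadmap to the proof, but not a proof: the decisive step is named rather than carried out.
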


\begin{lemma}\label{square2}\cite{Sq}
Let $G$ be a solvable group with $\Delta(G)\cong C_4$. Then $G\cong A\times B$, where  $A$ and $B$ are disconnected groups. Also for some distinct primes $p,q,r$ and $s$, $\rho(A)=\{p,q\}$ and $\rho(B)=\{r,s\}$.
\end{lemma}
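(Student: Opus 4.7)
The plan is to exploit the structural characterization $\Delta(G)\cong C_4$, which is the same as saying $\Delta(G)$ is the join $K_2^c \star K_2^c$, or equivalently that $\Delta(G)^c$ is the disjoint union of two edges $K_2\cup K_2$. I would first label $\rho(G)=\{p,q,r,s\}$ so that the non-edges of $\Delta(G)$ are exactly $\{p,q\}$ and $\{r,s\}$. Then no character degree of $G$ is divisible by $pq$, and none is divisible by $rs$. Before diving into structure, I would record that this picture is consistent with $G$ being solvable via Palfy's condition (Lemma \ref{pal}): every $3$-element subset of $\{p,q,r,s\}$ contains an edge of $C_4$, since each non-edge meets such a triple in at most two vertices.

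The crux of the proof is to establish the direct product decomposition $G\cong A\times B$ with $\rho(A)=\{p,q\}$ and $\rho(B)=\{r,s\}$. I would approach this via normal Hall subgroups: since $G$ is solvable, Hall $\{p,q\}$- and Hall $\{r,s\}$-subgroups exist, and the aim is to show that both are normal and centralize each other. The key tool is the circle of results connecting non-adjacency in $\Delta(G)$ with normal Hall structure: if two primes $t,u\in\rho(G)$ are non-adjacent, so that no $\chi(1)$ is divisible by $tu$, then standard arguments via Ito--Michler and Clifford theory (applied inside Fitting series) force a strong splitting of the prime action. For each prime $t\in\rho(G)$ here, $t$ has exactly one non-neighbor in $\Delta(G)$, which rigidly pairs $t$ with that non-neighbor and prevents the $\{p,q\}$- and $\{r,s\}$-parts from interacting via any non-linear character. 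Inducting on $|G|$ and analyzing a minimal normal subgroup $N$ of $G$ — necessarily an elementary abelian $\ell$-group for some $\ell\in\rho(G)\cup\{\text{other primes}\}$ — one can show $N$ lies in one of the two Hall subgroups and the quotient inherits the same $C_4$-structure (or collapses to a strictly smaller case already treated).

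Once $G\cong A\times B$ is established, the remaining conclusions are immediate. Since $\rho(A)\cap\rho(B)=\emptyset$, the character graph of the direct product is the join $\Delta(G)=\Delta(A)\star\Delta(B)$. Comparing with $\Delta(G)\cong C_4\cong K_2^c\star K_2^c$ and noting that each side is a join of graphs on two vertices, uniqueness of the complementary components forces $\Delta(A)\cong K_2^c$ on $\{p,q\}$ and $\Delta(B)\cong K_2^c$ on $\{r,s\}$. Thus neither $A$ nor $B$ has a character degree divisible by the product of its two primes, so each of $\Delta(A),\Delta(B)$ is disconnected, which means $A$ and $B$ are disconnected groups in the sense of the introduction.

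The main obstacle I expect is the direct product decomposition $G\cong A\times B$: the conclusion that a join in $\Delta(G)$ lifts to a direct product in $G$ is not formal, and requires the delicate character-theoretic input about how a non-edge between $\{p,q\}$ and itself (and between $\{r,s\}$ and itself) forces Hall $\{p,q\}$- and Hall $\{r,s\}$-subgroups to be normal and centralize one another. Everything else — the identification of the $\rho$-partition and the disconnectedness of $\Delta(A)$ and $\Delta(B)$ — then falls out cleanly from the graph-theoretic picture.
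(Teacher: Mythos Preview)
The paper does not prove this lemma at all: it is quoted verbatim from Lewis and Meng \cite{Sq} and used as a black box. So there is no ``paper's own proof'' to compare against; the authors simply import the result.

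As for your outline, the framing is correct and the closing paragraph (deducing $\Delta(A)\cong K_2^c$, $\Delta(B)\cong K_2^c$ and hence that $A,B$ are disconnected once $G\cong A\times B$ is known) is fine. But you have not actually proved anything: you yourself flag the direct product decomposition as ``the main obstacle'' and say it ``requires the delicate character-theoretic input'' without supplying that input. The sentence about ``inducting on $|G|$ and analyzing a minimal normal subgroup $N$'' is a gesture, not an argument; in particular you do not explain why the quotient $G/N$ still has $\Delta(G/N)\cong C_4$ (it need not---primes can drop out of $\rho$), nor how the induction hypothesis on $G/N$ lifts back to a direct decomposition of $G$, nor why the Hall $\{p,q\}$- and Hall $\{r,s\}$-subgroups centralize each other. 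The Lewis--Meng proof is several pages long precisely because this step is genuinely hard, involving a careful analysis of the Fitting subgroup and of how the two ``sides'' of the square act on each other's chief factors. What you have written is a correct statement of what needs to be shown, not a proof of it.
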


\begin{lemma}\label{square}\cite{Sq}
Let $G$ be a solvable group. If $\Delta(G)$ has at least $4$ vertices, then either $\Delta(G)$ contains a triangle or $\Delta(G)\cong C_4$.
\end{lemma}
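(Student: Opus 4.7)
I would assume, toward a contradiction, that $\Delta(G)$ contains no triangle and aim to show $\Delta(G)\cong C_4$. First, applying P\'alfy's condition (Lemma~\ref{pal}) to every three-element subset of $\rho(G)$ guarantees at least one edge among any three vertices, so the complement $\Delta(G)^c$ is also triangle-free. Since both $\Delta(G)$ and $\Delta(G)^c$ are triangle-free and the Ramsey number $R(3,3)=6$, we obtain $n:=|\rho(G)|\leq 5$; combined with the hypothesis $n\geq 4$, only $n\in\{4,5\}$ remains.

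A direct enumeration of the graphs on $n$ vertices that are triangle-free with triangle-free complement yields: for $n=5$, the unique such graph (up to isomorphism) is the five-cycle $C_5$; for $n=4$, the candidates are $C_4$, the path $P_4$, and the matching $2K_2$. Hence the lemma reduces to excluding $C_5$, $P_4$, and $2K_2$ as character graphs of a solvable group.

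This exclusion is the substantive difficulty. For $2K_2$, the graph has two connected components each isomorphic to $K_2$, so Lewis's classification of disconnected solvable character graphs \cite{los} applies, and a direct inspection of the six disconnected types shows that none realizes two components of size two, each with an edge. For $P_4$, which is connected of diameter exactly $3$, I would invoke the theorem of Casolo et al.\ mentioned in the introduction, which forces the structural form $G=PH$ with $P$ a normal non-abelian Sylow $p$-subgroup and $H$ a $p$-complement; combining this with P\'alfy's condition applied to three-element subsets involving the two degree-one endpoint primes produces an additional edge that contradicts the path structure. For $C_5$, the most delicate case, one exploits the fact that $C_5$ is $2$-regular, triangle-free, and self-complementary; analyzing the Fitting subgroup and the action of $G$ on its chief factors (in the style developed in \cite{Sq}) rules this out. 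Once all three residual graphs are excluded, only $\Delta(G)\cong C_4$ survives.

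The P\'alfy--Ramsey setup is essentially combinatorial bookkeeping and reduces the problem to three small cases; the main obstacle is the structural group-theoretic analysis needed to exclude those three graphs, in particular the $C_5$ case, where neither the connectivity nor the degree sequence offers an immediate foothold and one is forced to argue via the Fitting series of the solvable group $G$.
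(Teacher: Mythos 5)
The paper does not actually prove this statement: it is quoted directly from Lewis and Meng \cite{Sq}, so there is no internal proof to compare yours against. Judged on its own merits, your combinatorial reduction is correct and is the standard opening move: P\'alfy's condition (Lemma~\ref{pal}) says every $3$-subset of $\rho(G)$ spans an edge, so $\Delta(G)^c$ is triangle-free; with $\Delta(G)$ also triangle-free, $R(3,3)=6$ gives $|\rho(G)|\leq 5$; and your enumeration of the surviving graphs ($C_4$, $P_4$, $2K_2$ on four vertices, $C_5$ on five) is accurate. The $2K_2$ exclusion is also genuinely routine: besides Lewis's classification one can quote P\'alfy's bound that a disconnected solvable character graph with components of sizes $m\leq n$ satisfies $n\geq 2^m-1$, which $2K_2$ violates.

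The gap is that the two hard exclusions, $P_4$ and $C_5$, are only gestured at, and each is a substantial theorem in its own right — indeed they are essentially the entire content of the lemma beyond Ramsey bookkeeping. For $P_4$: the Casolo et al.\ diameter-three theorem gives $G=PH$ with $P$ a normal non-abelian Sylow subgroup, but it does not by itself yield a contradiction, since diameter-three solvable character graphs do exist (with six vertices); and your claim that P\'alfy's condition applied to triples containing the two endpoint primes "produces an additional edge" is false as stated, because every $3$-subset of $V(P_4)$ already contains an edge of $P_4$, so P\'alfy's condition is satisfied by $P_4$ and yields nothing new. Excluding $P_4$ requires a separate structural argument (it is a theorem of Zhang that Lewis--Meng rely on). For $C_5$, "analyzing the Fitting subgroup and the chief factors" names a method rather than giving an argument; nothing in the sketch locates the contradiction, and note that the odd-cycle-in-the-complement machinery of Lemma~\ref{cycle} is unavailable here since it forces a $\mathrm{PSL}_2$ section, i.e.\ it is a statement about non-solvable groups. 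As it stands the proposal is a correct reduction plus a to-do list, not a proof.
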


The structure of the character graph of $\rm{PSL}_2(q)$ is determined as follows:

\begin{lemma}\label{chpsl}\cite{[white]}
Let $G\cong \rm{PSL}_2(q)$, where $q\geqslant 4$ is a power of a prime $p$.\\
\textbf{a)}
 If $q$ is even, then $\Delta(G)$ has three connected components, $\{2\}$, $\pi(q-1)$ and $\pi(q+1)$, and each component is a complete graph.\\
\textbf{b)}
 If $q>5$ is odd, then $\Delta(G)$ has two connected components, $\{p\}$ and $\pi((q-1)(q+1))$.\\
i)
 The connected component $\pi((q-1)(q+1))$ is a complete graph if and only if $q-1$ or $q+1$ is a power of $2$.\\
ii)
  If neither of $q-1$ or $q+1$  is a power of $2$, then $\pi((q-1)(q+1))$ can be partitioned as $\{2\}\cup M \cup P$, where $M=\pi (q-1)-\{2\}$ and $P=\pi(q+1)-\{2\}$ are both non-empty sets. The subgraph of $\Delta(G)$ corresponding to each of the subsets $M$, $P$ is complete, all primes are adjacent to $2$, and no prime in $M$ is adjacent to any prime in $P$.
 \end{lemma}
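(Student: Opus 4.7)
The plan is to derive everything directly from the classical list of character degrees of $\mathrm{PSL}_2(q)$, which is the real content cited in the reference to the lemma. Recall that for $q=2^f$ one has $\mathrm{cd}(\mathrm{PSL}_2(q))=\{1,\,q,\,q-1,\,q+1\}$, and for odd $q=p^f>5$ one has $\mathrm{cd}(\mathrm{PSL}_2(q))=\{1,\,q,\,q-1,\,q+1,\,(q+\epsilon)/2\}$ for some $\epsilon\in\{-1,+1\}$ depending on $q\bmod 4$. Given these, both $\rho(G)$ and all adjacencies in $\Delta(G)$ can be read off directly by checking which pairs of primes share a common character degree.

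For part (a), I would observe that the integers $q=2^f$, $q-1$, $q+1$ are pairwise coprime (consecutive integers, with $q\pm 1$ both odd), so the sets $\{2\}$, $\pi(q-1)$ and $\pi(q+1)$ partition $\rho(G)$ and each non-trivial degree has all its prime factors inside exactly one of them. The degree $q-1$ then forces $\pi(q-1)$ to be a clique, $q+1$ forces $\pi(q+1)$ to be a clique, and the only degree divisible by $2$ is the prime power $q=2^f$, so $\{2\}$ is an isolated vertex. This gives (a) verbatim.

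For part (b), I would first verify that $\{p\}$ is an isolated component: the unique degree divisible by $p$ is $q=p^f$, a prime power, and $p$ divides none of $q\pm 1$ or $(q+\epsilon)/2$. Inside the remaining component $\pi((q-1)(q+1))$, the degree $q-1$ makes $\pi(q-1)$ a clique and $q+1$ makes $\pi(q+1)$ a clique; since $q$ is odd, $2\in\pi(q-1)\cap\pi(q+1)$ and so $2$ is adjacent to every other prime in the component. For distinct odd primes $r\in M:=\pi(q-1)\setminus\{2\}$ and $s\in P:=\pi(q+1)\setminus\{2\}$, the identity $\gcd(q-1,q+1)=2$ ensures that neither $q-1$ nor $q+1$ is divisible by both $r$ and $s$; nor is $(q+\epsilon)/2$, whose primes lie in $\pi(q+\epsilon)$, nor $q=p^f$. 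Hence $r,s$ are non-adjacent, giving (ii). For (i), if $q+\eta$ is a power of $2$ for some $\eta\in\{\pm 1\}$, then the corresponding set among $M,P$ is empty and $\pi((q-1)(q+1))=\pi(q-\eta)$ is a single clique; conversely, if neither $q-1$ nor $q+1$ is a power of $2$, both $M$ and $P$ are non-empty and the non-edges above obstruct completeness.

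The argument is not conceptually deep; the only mildly subtle point is checking that the extra degree $(q+\epsilon)/2$ in the odd case does not accidentally create an edge between $M$ and $P$, which is settled by the inclusion $\pi\bigl((q+\epsilon)/2\bigr)\subseteq\pi(q+\epsilon)$. The small-$q$ exclusions ($q\geqslant 4$, and $q>5$ in (b)) rule out the exceptional isomorphisms $\mathrm{PSL}_2(2)\cong S_3$, $\mathrm{PSL}_2(3)\cong A_4$, and $\mathrm{PSL}_2(5)\cong A_5\cong\mathrm{PSL}_2(4)$, in which the character degree list takes a different shape and must be handled separately if needed.
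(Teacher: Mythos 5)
Your derivation is correct: reading off $\Delta(\mathrm{PSL}_2(q))$ from the generic degree list $\{1,q,q-1,q+1\}$ (resp.\ $\{1,q,q-1,q+1,(q+\epsilon)/2\}$ with $q\equiv\epsilon\pmod 4$) together with the coprimality of $q$, $q-1$, $q+1$ and $\gcd(q-1,q+1)\mid 2$ is exactly how the result is established in White's paper, which this lemma cites without reproducing a proof. Your handling of the extra degree $(q+\epsilon)/2$ via $\pi\bigl((q+\epsilon)/2\bigr)\subseteq\pi(q+\epsilon)$ and of the small-$q$ exceptions is the right and standard bookkeeping, so there is nothing to add.
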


When $\Delta(G)^c$ is not a bipartite graph, then there exists a useful restriction on the structure of $G$ as follows:

\begin{lemma}\label{cycle} \cite{AC}
Let $G$ be a finite group and $\pi$ be a subset of the vertex set of $\Delta(G)$ such that $|\pi|> 1$ is an odd number. Then $\pi$ is the set of vertices of a cycle in $\Delta(G)^c$ if and only if $O^{\pi^\prime}(G)=S\times A$, where $A$ is abelian, $S\cong \rm{SL}_2(u^\alpha)$ or $S\cong \rm{PSL}_2(u^\alpha)$ for a prime $u\in \pi$ and a positive integer $\alpha$, and the primes in $\pi - \{u\}$ are alternately odd divisors of $u^\alpha+1$ and  $u^\alpha-1$.
\end{lemma}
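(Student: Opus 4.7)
The plan is to prove the biconditional after first reducing to the case $G = O^{\pi'}(G)$. Writing $N := O^{\pi'}(G)$, so that $G/N$ is a $\pi'$-group, Lemma~\ref{fraction} gives $\chi(1)/\theta(1) \mid [G:N]$ whenever $\chi \in \mathrm{Irr}(G)$ lies over $\theta \in \mathrm{Irr}(N)$. Because $[G:N]$ is coprime to every prime in $\pi$, this ratio is a $\pi'$-integer, so $\chi(1)$ and $\theta(1)$ share the same $\pi$-part. Combined with Clifford's theorem (every $\theta \in \mathrm{Irr}(N)$ lies under some $\chi \in \mathrm{Irr}(G)$), this forces $\Delta(G)[\pi] = \Delta(N)[\pi]$, and the condition that $\pi$ is the vertex set of a cycle in $\Delta(G)^c$ transfers verbatim to $N$. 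We may therefore assume $G = N$ and must show $G \cong S \times A$ with the stated structure if and only if $\pi$ carries an odd cycle in $\Delta(G)^c$.

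For the sufficiency direction, suppose $G = S \times A$ with $A$ abelian and $S \cong \mathrm{PSL}_2(u^\alpha)$ or $\mathrm{SL}_2(u^\alpha)$. Then $\mathrm{cd}(G) = \mathrm{cd}(S)$ and $\Delta(G) = \Delta(S)$, so Lemma~\ref{chpsl} applies: in the relevant case, $u$ is an isolated vertex of $\Delta(S)$, the sets $M := \pi(u^\alpha - 1) \setminus \{2\}$ and $P := \pi(u^\alpha + 1) \setminus \{2\}$ are cliques in $\Delta(S)$, and no edge of $\Delta(S)$ joins $M$ to $P$. Passing to the complement, $u$ becomes a universal vertex, $M$ and $P$ are independent, and $M \cup P$ induces a complete bipartite subgraph in $\Delta(S)^c$. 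Taking $u$ together with an even-length alternating sequence drawn from $M$ and $P$ produces an odd cycle in $\Delta(G)^c$ exhibiting exactly the alternation of divisors of $u^\alpha \pm 1$ called for in the statement.

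The heart of the argument is the necessity direction. From an odd cycle $p_1 p_2 \cdots p_n p_1$ in $\Delta(G)^c$, every consecutive pair is non-adjacent in $\Delta(G)$, and Pálfy's condition (Lemma~\ref{pal}) applied to an independent triple along the cycle rules out solvability of $G$: when $n = 3$ the three vertices are pairwise non-adjacent, contradicting Pálfy directly, and for $n \geqslant 5$ one extracts an independent triple from the cycle after a chord analysis. Next one invokes Lemma~\ref{good} along a chief series of $G$ to locate a non-abelian simple section $S$, and the $p_i p_{i+1}$ non-adjacencies push the cycle condition down to $\Delta(S)^c$ by the same $O^{\pi'}$-reduction applied factor-by-factor. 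The explicit character-degree data of the non-abelian finite simple groups then restrict $S$ to $\mathrm{PSL}_2(u^\alpha)$ (or its Schur cover $\mathrm{SL}_2(u^\alpha)$) with $u \in \pi$, since only this family produces the $\{u\} \sqcup \{2\} \sqcup M \sqcup P$ disconnection required to host an odd cycle of the prescribed form. Finally, Lemma~\ref{method} together with Clifford analysis of extensions of $S$ by a $\pi'$-group absorbs the remaining scaffolding into an abelian direct factor, giving $G = S \times A$, and the cyclic ordering of $\pi \setminus \{u\}$ is read off as the $M$--$P$ alternation.

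The principal obstacle is the identification step: showing that $\mathrm{PSL}_2(u^\alpha)$ is the \emph{unique} non-abelian simple group whose character graph can accommodate the alleged odd cycle in the complement. This is a CFSG-based case analysis through the character-degree sets of the simple families, where every other Lie-type and sporadic group must be excluded on the ground that its character graph is too connected to host the required pattern of non-edges. Once $S$ is pinned down, the splitting $G = S \times A$ and the alternation statement follow in a relatively mechanical way from the preliminary lemmas already assembled in the paper.
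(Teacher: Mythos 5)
This statement is not proved in the paper at all: it is quoted verbatim from the reference [AC] (Akhlaghi--Casolo--Dolfi--Pacifici--Sanus) and used as a black box, so there is no internal proof to compare against. Judged on its own terms, your proposal is an outline rather than a proof, and two of its steps are genuinely broken. First, in the necessity direction you claim that non-solvability follows from P\'alfy's condition (Lemma~\ref{pal}) by ``extracting an independent triple'' from the odd cycle. For $n=3$ this works, but for $n\geqslant 5$ it does not: by the standard chord argument a shortest odd cycle in $\Delta(G)^c$ is induced, and an induced $C_5$ (or $C_7$, \dots) in $\Delta(G)^c$ is triangle-free in the complement, i.e.\ contains no three pairwise non-adjacent vertices of $\Delta(G)$. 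So Lemma~\ref{pal} gives nothing, and the fact that a solvable group admits no odd cycle in $\Delta(G)^c$ is itself a corollary of the theorem you are trying to prove, not an input to it.

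Second, the entire core of the argument --- locating a non-abelian simple section, showing it must be $\mathrm{PSL}_2(u^\alpha)$ with $u\in\pi$ and the remaining primes alternating between $\pi(u^\alpha\pm1)$, and then proving that $O^{\pi'}(G)$ splits as $S\times A$ with $A$ abelian --- is asserted in a single sentence as ``a CFSG-based case analysis'' and ``relatively mechanical.'' That case analysis is the substance of the cited paper [AC]; none of the lemmas available here (\ref{fraction}, \ref{good}, \ref{method}) comes close to delivering it, and Lemma~\ref{good} in particular only treats $G/N$ simple, not an arbitrary chief series. The opening reduction to $G=O^{\pi'}(G)$ and the sufficiency direction (reading the odd cycle off Lemma~\ref{chpsl}, including the even-characteristic case you omit) are fine, but the necessity direction as written is a statement of intent, not a proof.
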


\begin{lemma}\label{her} \cite{[her]}
Let $G$ be a simple group. If $|\pi(G)|=3$, then $G$ is isomorphic to one of the groups $A_5,A_6,\rm{PSL}_2(7),\rm{PSL}_2(8),\rm{PSL}_2(17),\rm{PSL}_3(3),\rm{PSU}_3(3)$ and $\rm{PSU}_4(2)$.
\end{lemma}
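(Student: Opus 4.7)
The plan is to invoke the Classification of Finite Simple Groups (CFSG). By Burnside's $p^a q^b$ theorem, any non-abelian finite simple group already satisfies $|\pi(G)|\geq 3$, so we are working at the boundary of this inequality. According to CFSG, $G$ is an alternating group $A_n$ with $n\geq 5$, a finite simple group of Lie type, or one of the $26$ sporadic groups, and I would go through these families in turn.

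For alternating groups, direct computation gives $|\pi(A_5)|=|\pi(A_6)|=3$, while for $n\geq 7$ a short induction using Bertrand's postulate (supplying a fresh prime in $(n/2,n]$ at each step) shows $|\pi(A_n)|\geq 4$. For sporadic groups, a single glance at the standard table of orders eliminates all candidates: the smallest sporadic, $M_{11}$ of order $2^4\cdot 3^2\cdot 5\cdot 11$, already has four prime divisors, and the others are larger and no less arithmetically rich. Thus the entire substance of the result lies in the Lie-type case.

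The main obstacle is $G\cong \mathrm{PSL}_2(q)$, where $|G|=q(q-1)(q+1)/\gcd(2,q-1)$; here the condition $|\pi(G)|=3$ becomes a diophantine constraint, since the pairwise almost-coprime factors $q$, $q-1$, $q+1$ must collectively involve only three primes. Splitting into $q$ even (so $q-1$ and $q+1$ are coprime odd prime powers, a Catalan/Fermat--Mersenne-style restriction) and $q$ odd, and applying Zsygmondy's theorem to $q\pm 1$, one shows that the only possibilities are $q\in\{4,5,7,8,9,17\}$. Absorbing the isomorphisms $\mathrm{PSL}_2(4)\cong\mathrm{PSL}_2(5)\cong A_5$ and $\mathrm{PSL}_2(9)\cong A_6$ then produces the three new entries $\mathrm{PSL}_2(7)$, $\mathrm{PSL}_2(8)$, and $\mathrm{PSL}_2(17)$.

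For the remaining Lie-type groups, I would apply Zsygmondy's theorem to the cyclotomic factors $\Phi_k(q)$ appearing in the standard order formulas: for classical groups of rank $\geq 5$ and all exceptional groups except the smallest, enough pairwise coprime $\Phi_k(q)$ contribute to $|G|$ to force $|\pi(G)|\geq 4$. This reduces the problem to a finite list of small-rank families over small fields, which one enumerates by hand to find $\mathrm{PSL}_3(3)$ (order $2^4\cdot 3^3\cdot 13$), $\mathrm{PSU}_3(3)$ (order $2^5\cdot 3^3\cdot 7$), and $\mathrm{PSU}_4(2)$ (order $2^6\cdot 3^4\cdot 5$) as the only survivors. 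Assembling all three lists gives precisely the seven groups claimed; the delicate point throughout is the careful bookkeeping of the Zsygmondy exceptions, so that no small-case solution slips through.
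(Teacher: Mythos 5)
Your proposal is correct in outline but follows a genuinely different route from the paper, which offers no proof at all: Lemma \ref{her} is simply quoted from Herzog's 1968 paper, a \emph{pre-classification} result proved with the characterization machinery available at the time (Brauer--Tuan type theorems on groups of order $2^a3^bp^c$ and characterizations of the small $\mathrm{PSL}_2(q)$). Your CFSG-based verification is the standard modern substitute: the alternating and sporadic cases are indeed trivial, and your reduction of the Lie-type case via Zsigmondy to $\mathrm{PSL}_2(q)$ plus a finite small-rank check is sound. What each approach buys: Herzog's argument is independent of the classification but occupies a whole paper; yours is conceptually a routine verification but rests on CFSG and still leaves the two genuinely laborious steps as assertions. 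First, the completeness of $q\in\{4,5,7,8,9,17\}$ is a nontrivial diophantine fact --- for odd $q=p$ one must split $p\pm1$ into the Fermat-type and Mersenne-type cases and use that $2^n+1=3^b$ forces $n\in\{1,3\}$ and $2^n-1=r^b$ forces $b=1$, and for $q=p^f$ with $f\geqslant 2$ one needs Zsigmondy on $p^{2f}-1$; this is essentially the content of Lemma \ref{oddfour} quoted elsewhere in the paper. Second, your cut-off ``rank $\geqslant 5$'' for the remaining classical groups is too generous: already $\mathrm{PSL}_3(q)$, $\mathrm{PSU}_3(q)$, $\mathrm{PSp}_4(q)$ and $G_2(q)$ acquire a fourth prime for all but a handful of $q$, and these low-rank families are exactly where the three exceptional entries $\mathrm{PSL}_3(3)$, $\mathrm{PSU}_3(3)$, $\mathrm{PSU}_4(2)$ live, so the by-hand enumeration there is the real work. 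Since the lemma is only cited in the paper, either route is acceptable, but as written your argument is a plan rather than a proof.
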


 \begin{lemma}\label{type}
  Let $G$ be a disconnected group. If $|\rho(G)|=2$ and $2 \notin \rho(G)$, then $G$ is of disconnected Type $1$ or $4$.
  \end{lemma}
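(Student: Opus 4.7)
My plan is to rely on Lewis's classification \cite{los} of disconnected finite solvable groups into the six types corresponding to Examples 2.1 through 2.6 there. By definition, a disconnected group $G$ is a solvable group with $\Delta(G)$ disconnected, hence $G$ must be of disconnected Type $n$ for some $n\in\{1,2,3,4,5,6\}$; the task therefore reduces to ruling out $n\in\{2,3,5,6\}$ under the hypotheses $|\rho(G)|=2$ and $2\notin\rho(G)$.

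First I would note that with $|\rho(G)|=2$, the disconnected graph $\Delta(G)$ is exactly the empty graph on two distinct odd vertices, so no character degree of $G$ is divisible by $2$. I would then go through each of Types 2, 3, 5 and 6 in turn, showing in each case that the structural hypotheses imposed by the corresponding example in \cite{los} force the existence of some $\chi\in\rm{Irr}(G)$ with $2\mid\chi(1)$. In each of these four types Lewis's construction features a Frobenius or two-step Frobenius section whose complement (or an appropriate quotient) necessarily has even order; a standard Clifford-theoretic argument, together with Lemma \ref{fraction}, then produces an irreducible character of $G$ whose degree is divisible by $2$, contradicting $2\notin\rho(G)$.

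Once Types 2, 3, 5 and 6 have been excluded, only Types 1 and 4 remain, which is the desired conclusion. The main obstacle is the careful case-by-case parsing of the precise hypotheses of Examples 2.2, 2.3, 2.5 and 2.6 of \cite{los} and the verification of the divisibility-by-$2$ claim in each; this is essentially bookkeeping once those examples are laid out, and can be carried out uniformly by isolating the Frobenius section in each type and applying Clifford's theorem to its action on an appropriate normal subgroup.
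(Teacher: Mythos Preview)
Your approach is essentially the same as the paper's: both rely directly on Lewis's classification \cite{los} of disconnected solvable groups into six types and eliminate Types $2,3,5,6$ by inspection. The paper's own proof is a single sentence (``Using descriptions of the six types of disconnected groups \cite{los}, we are done.''), so your proposal in fact supplies considerably more detail than the paper does.

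One minor caution: your plan asserts that in each of Types $2,3,5,6$ the structural hypotheses force $2\in\rho(G)$. When you actually carry out the case-by-case check you may find that for some of these types the more natural obstruction is $|\rho(G)|\geqslant 3$ rather than $2\mid\chi(1)$ (Lewis's descriptions constrain the sizes of the two components as well as the primes involved). Since you have both hypotheses $|\rho(G)|=2$ and $2\notin\rho(G)$ at your disposal, this does not damage the argument, but you should be prepared to invoke whichever obstruction each example actually exhibits rather than committing in advance to the even-order-complement mechanism for all four.
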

  \begin{proof}
  Using descriptions of the six types of disconnected groups \cite{los}, we are done.
  \end{proof}

\section{About $\rm{PSL}_2(q)$}
$\noindent$ In order to the proof of Main Theorem, we will need some properties of $\rm{PSL}_2(q)$, where $q$ is a prime power.
We will make use of Dickson's list of the subgroups of $\rm{PSL}_2(q)$, which can be found as Hauptsatz II.8.27 of \cite{hup}.
 We also use the fact that the Schur multiplier of  $\rm{PSL}_2(q)$ is trivial unless $q=4$ or $q$ is odd, in which case it is of order $2$ if $q\neq 9$ and of order $6$ if $q=9$. We now state some relevant results on $\Delta(\rm{PSL}_2(q=2^f))$
needed in the next sections.
 We will use Zsigmondy's Theorem which can be found in \cite{zsi}.

 \begin{lemma} \label{interest}\cite{[e]}
 If $S=\rm{PSL}_2(2^f)$ and $|\rho(S)|=4$, then either:\\
 \textbf{a)} $f=4$, $2^f+1=17$ and $2^f-1=3.5$ or\\
 \textbf{b)} $f\geqslant 5$ is prime, $2^f-1=r$ is prime, and $2^f+1=3.t^\beta$, with $t$ an odd prime and $\beta\geqslant 1$ odd.
 \end{lemma}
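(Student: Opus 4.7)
The plan is to invoke Lemma~\ref{chpsl}(a) to reduce the problem to an elementary number-theoretic analysis of $\pi(2^f \pm 1)$. By that lemma, the components of $\Delta(S)$ are $\{2\}$, $\pi(2^f - 1)$, and $\pi(2^f + 1)$, and these three sets are pairwise disjoint because $\gcd(2^f - 1, 2^f + 1) = 1$. The hypothesis $|\rho(S)| = 4$ therefore gives $|\pi(2^f - 1)| + |\pi(2^f + 1)| = 3$, and I would split into the two cases $\{1,2\}$ for these cardinalities.

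First I would treat the case $|\pi(2^f+1)| = 1$, $|\pi(2^f-1)| = 2$, which should yield conclusion~(a). Writing $2^f + 1 = p^k$, Mihailescu's theorem (or a direct factorization of $p^k - 1 = 2^f$: the case $k=2$ reduces to two powers of $2$ differing by $2$, and for odd $k \geq 3$ the alternating factor $p^{k-1} - p^{k-2} + \cdots + 1$ is odd and bigger than $1$) forces either $k = 1$ or $(f,p,k) = (3,3,2)$. The latter is incompatible with $|\pi(2^f - 1)| = |\pi(7)| = 1 \neq 2$. So $2^f + 1$ is prime, which forces $f = 2^c$ for some $c \geq 1$, since $2^e + 1$ divides $2^f + 1$ whenever $e \mid f$ with $f/e$ odd $>1$. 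The identity $2^{2^c} - 1 = \prod_{i=0}^{c-1}\bigl(2^{2^i} + 1\bigr)$ together with the pairwise coprimality of Fermat numbers yields $|\pi(2^f - 1)| \geq c$, so $c \leq 2$; inspection shows $c = 1$ gives $|\pi(3)| = 1$ while $c = 2$ gives $f = 4$, $2^f - 1 = 3 \cdot 5$, $2^f + 1 = 17$, which is exactly~(a).

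Next I would treat the case $|\pi(2^f-1)| = 1$, $|\pi(2^f+1)| = 2$, yielding~(b). Writing $2^f - 1 = r^k$, an elementary parity argument (even $k$ forces $r^k + 1 \equiv 2 \pmod 4$ and hence $f = 1$; odd $k \geq 3$ makes $r^{k-1} - r^{k-2} + \cdots + 1$ an odd factor of $2^f$ strictly bigger than $1$, impossible) shows $k = 1$, so $r$ is a Mersenne prime and $f$ is prime. The cases $f = 2, 3$ each give $|\pi(2^f + 1)| = 1$, so $f \geq 5$. Because $f$ is odd, $3 \mid 2^f + 1$; write $2^f + 1 = 3^a t^\beta$ with $t \neq 3$ an odd prime. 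The order of $2$ mod $9$ is $6$, so $9 \mid 2^f + 1$ would require $f \equiv 3 \pmod 6$, contradicting $f \geq 5$ prime; hence $a = 1$. Reducing $2^f + 1 = 3 t^\beta$ mod $4$ gives $t^\beta \equiv 3 \pmod 4$, and since odd squares are $\equiv 1 \pmod 4$, $\beta$ must be odd. This is~(b).

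The main obstacle is keeping the case analysis clean while invoking the correct elementary number-theoretic inputs: Mihailescu's theorem (or its replacement by hand), the factorization of $2^{2^c} - 1$ into pairwise coprime Fermat numbers, and the mod-$4$/mod-$9$ refinements for the shape of $2^f + 1$. None of these is deep individually, but the bookkeeping of consistently excluding the small exceptional values $f = 2, 3$ in each sub-case is where small mistakes are most likely.
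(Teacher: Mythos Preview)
Your argument is correct and self-contained. Note, however, that in the paper this lemma is not proved at all: it is quoted verbatim from Lewis--White \cite{[e]} and carries no proof environment. So there is nothing to compare against; you are supplying a proof where the paper simply cites one.

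Two very small cosmetic points. In the parenthetical alternative to Mihailescu for $p^k - 1 = 2^f$, the factor you want for odd $k\geq 3$ is $p^{k-1}+p^{k-2}+\cdots+1$ (all plus signs), not the alternating one; your conclusion that it is odd and exceeds $1$ is unaffected. Also, your sketch handles $k=2$ and odd $k\geq 3$ explicitly but not even $k\geq 4$; this is covered by Mihailescu anyway, and is easily patched directly by noting that any prime $\ell\mid k$ gives $p^\ell-1\mid 2^f$, which forces $\ell=2$ and $p=3$, then checking $3^4-1=80$. Neither issue affects the validity of your main line, which is clean and complete.
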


 \begin{lemma}\label{evenfive}
 If $S=\rm{PSL}_2(q=2^f)$ and $|\pi(q\pm 1)|=2$, then either:\\
 \textbf{a)} $f$ is a prime.\\
 \textbf{b)} $f=6$ or $9$.
 \end{lemma}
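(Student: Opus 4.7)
The plan is to apply Zsigmondy's theorem to the cyclotomic factorisations
\[
2^f-1=\prod_{d\mid f}\Phi_d(2),\qquad 2^f+1=\prod_{d\mid 2f,\;d\nmid f}\Phi_d(2),
\]
where $\Phi_d$ denotes the $d$-th cyclotomic polynomial. For every integer $d\geqslant 2$ with $d\neq 6$, Zsigmondy guarantees that $\Phi_d(2)$ has a primitive prime divisor (one not dividing $\Phi_e(2)$ for any $1\leqslant e<d$), and these primes are pairwise distinct for distinct $d$. Consequently
\[
|\pi(2^f-1)|\;\geqslant\;\#\{d\mid f:\;d\geqslant 2,\;d\neq 6\},\qquad |\pi(2^f+1)|\;\geqslant\;\#\{d\mid 2f:\;d\nmid f,\;d\neq 6\}.
\]

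First I would use $|\pi(2^f-1)|=2$ to bound the number of divisors $\tau(f)$ of $f$. If $6\nmid f$ this forces $\tau(f)\leqslant 3$, so $f\in\{1,\,p,\,p^2\}$ for some prime $p$. If $6\mid f$, writing $f=2^a 3^b t$ with $a,b\geqslant 1$ and $\gcd(t,6)=1$ gives $\tau(f)=(a+1)(b+1)\tau(t)\geqslant 4$, with equality only at $f=6$; every other multiple of $6$ has $\tau(f)\geqslant 6$, violating the bound. Hence $f$ is a prime, a prime square, or $f=6$.

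Next I would rule out the stray prime squares via the second bound. For $f=p^2$ the divisors of $2p^2$ not dividing $p^2$ are exactly $\{2,\,2p,\,2p^2\}$. When $p\geqslant 5$ none of these equals $6$, so $|\pi(2^f+1)|\geqslant 3$, contradicting the hypothesis. When $p=3$ the middle divisor is $6$ and the lower bound drops to $2$, so $f=9$ survives. When $p=2$ a direct computation gives $2^4+1=17$, so $|\pi(q+1)|=1\neq 2$ and this case is discarded. The delicate point of the argument is the careful book-keeping of Zsigmondy's exceptional value $d=6$ on both sides: it is precisely this exception that lets the composite values $f=6$ and $f=9$ slip through, while the rest of the proof is a straightforward divisor count.
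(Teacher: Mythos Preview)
Your argument is correct and rests on the same key tool as the paper's proof, namely Zsigmondy's theorem, but the organisation is somewhat different. The paper proceeds by ad~hoc case analysis: first it assumes $f$ has two distinct prime divisors $p_1,p_2$, splits according to whether $p_1p_2=6$ or not, and in each subcase applies Zsigmondy once or twice to exhibit three distinct primes dividing $2^f-1$; having reduced to $f=p^n$, it then separately treats $p$ odd (where Zsigmondy forces $|\pi(2^f+1)|\geqslant 3$ unless $f=9$) and $p=2$ (where $n=2$ gives $|\pi(q+1)|=1$ and $n\geqslant 3$ gives $255\mid 2^f-1$). Your version packages all of this uniformly via the cyclotomic factorisation $2^f\mp 1=\prod\Phi_d(2)$ and the divisor count $\tau(f)$, which makes the exceptional role of $d=6$ completely transparent and avoids the separate treatment of large powers of $2$. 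The trade-off is that the paper's argument is slightly more elementary in that it never names cyclotomic polynomials, while yours is shorter and makes clearer why exactly $f=6$ and $f=9$ are the only composite survivors.
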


 \begin{proof}
 Let $|\pi(f)|\geqslant 2$. Then for some distinct primes $p_1$ and $p_2$, $p_1p_2|f$. Thus one of the following cases occurs: \\
 a) $p_1p_2=6$. If $f=p_1p_2$, then we are done. Thus we suppose $f\neq p_1p_2$. Hence there is a prime $p$ so that $p_1p_2p\mid f$. Therefore by Zsigmondy's Theorem, there is a prime $s$ such that $s|2^{6p}-1$ and $s\nmid 2^6-1$. Thus $3.7.s\mid 2^f-1$ which is a contradiction. \\
 b) $p_1p_2\neq 6$. Let $p_1<p_2$. By Zsigmondy's Theorem, there are primes $s_1,s_2$ and $s_3$ so that $s_1\mid 2^{p_1p_2}-1$, $s_1\nmid 2^{p_1}-1$,  $s_1\nmid 2^{p_2}-1$,  $s_2\mid 2^{p_2}-1$,  $s_2\nmid 2^{p_1}-1$ and  $s_3\mid 2^{p_1}-1$. Hence $s_1,s_2$ and $s_3$ are distinct and $s_1s_2s_3\mid 2^f-1$. It is a contradiction. \\
 Therefore for some prime $p$ and positive integer $n$, $f=p^n$. If $f=9$, we are done.  Now let $n\geqslant 2$ and $f\neq 9$. If $p$ is odd, then by Zsigmondy's Theorem, $|\pi(2^f+1)|\geqslant 3$ which is a contradiction. Thus $p=2$ and $q=2^{2^{n}}$.
If $n=2$, then $q+1=17$ which is impossible.
Thus $n\geqslant 3$, then $255=2^8-1 \mid 2^f-1$ and $|\pi(q-1)|\geqslant 3$ which is again a contradiction.
 Therefore $f=p$ is a prime and the proof is completed.
 \end{proof}

 In the sequel of this section, we let $p$ be a prime, $f\geqslant 1$ be an integer, $q=p^f$ and $S\cong \rm{PSL}_2(q)$.
  When $q$ is an odd prime power and $|\pi(S)|=4$, there is a useful result on possible values for $q$.
 \begin{lemma}\label{oddfour}\cite{[e]}
 If $q$ is odd and $|\pi(q^2-1)|=3$, then either:\\
 \textbf{a)} $q\in \{3^4,5^2,7^2\}$,\\
\textbf{ b)} $p=3$ and $f$ is an odd prime, or\\
 \textbf{c)} $p\geqslant 11$ and $f=1$.
 \end{lemma}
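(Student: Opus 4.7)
The strategy is to apply Zsigmondy's Theorem to the factorization $q^2 - 1 = p^{2f} - 1$ and to count prime divisors via the divisor lattice of $2f$. For each $d\mid 2f$, let $n_d$ denote the number of primes that divide $p^d - 1$ but no $p^{d'} - 1$ with $d' < d$. Then every prime divisor of $p^{2f}-1$ is counted exactly once according to its multiplicative order, so $|\pi(q^2 - 1)| = \sum_{d\mid 2f} n_d$. Zsigmondy's Theorem yields $n_d \geqslant 1$ for every divisor $d>2$ of $2f$; moreover $n_1\geqslant 1$ (since $p-1\geqslant 2$), and $n_2\geqslant 1$ unless $p+1$ is a power of $2$. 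Writing $\tau$ for the number-of-divisors function, this gives the clean lower bound $|\pi(q^2-1)|\geqslant \tau(2f)-1$.

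I would first dispose of $f=1$ by direct computation: $3^2-1=8$, $5^2-1=2^3\cdot 3$ and $7^2-1=2^4\cdot 3$ each have at most two prime divisors, so the hypothesis forces $p\geqslant 11$, which is case (c). For $f\geqslant 2$, combining the lower bound above with $|\pi(q^2-1)| = 3$ forces $\tau(2f)\leqslant 4$, so $f\in\{2,4\}$ or $f$ is an odd prime (every other integer $f\geqslant 2$ produces too many divisors of $2f$).

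I would then treat the three surviving families. For $f=2$ the equation $n_1+n_2+n_4 = 3$ branches on whether $n_2=0$ (so $p+1$ is a power of $2$, whence $p$ is a Mersenne prime and a small check leaves only $p=7$) or $n_2\geqslant 1$ (forcing $n_1=n_2=n_4=1$, so $p-1$ is a power of $2$, i.e.\ $p$ is a Fermat prime, and a small check leaves only $p=5$). For $f=4$ the four divisors $1,2,4,8$ of $8$ give $n_1,n_4,n_8\geqslant 1$, so equality with $3$ forces $n_2=0$ and $n_1=n_4=n_8=1$; hence both $p-1$ and $p+1$ are powers of $2$, yielding $p=3$ and $q=3^4$. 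For $f$ an odd prime the divisors $1,2,f,2f$ of $2f$ likewise force $n_2=0$ and $n_1=1$, again giving $p=3$ and producing case (b). The three cases assembled cover exactly the three listed conclusions.

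The main obstacle I anticipate is the control of $n_d$ when a small exponent already admits several prime divisors: the estimate $|\pi(q^2-1)|\geqslant \tau(2f)-1$ is only a lower bound, and extra primes hidden inside $p-1$ or $p^2+1$ (for instance when $p-1$ has multiple odd prime factors) can inflate the count above $3$ and exclude otherwise plausible $(p,f)$ pairs. Pinning down the exact survivors therefore requires tightening Zsigmondy's estimates using the extreme restriction that $p\pm 1$ be powers of $2$ or prime powers, and then enumerating the resulting Fermat/Mersenne candidates. This number-theoretic bookkeeping, although elementary, is the delicate step that converts the divisor-count reduction into the precise list $\{3^4,5^2,7^2\}$, the family $p=3$ with $f$ an odd prime, and the $f=1$ tail.
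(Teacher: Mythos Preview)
The paper does not prove this lemma at all: it is quoted verbatim from Lewis--White \cite{[e]}, so there is no ``paper's own proof'' to compare against. Your Zsigmondy/divisor-counting argument is the standard route and is essentially how the result is obtained in the literature.

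Your outline is sound, but the phrase ``a small check'' in the $f=2$ case hides the only nontrivial step. For the Mersenne branch ($n_2=0$) you must rule out \emph{all} Mersenne primes $p=2^r-1$ with $r\geqslant 5$; this is genuinely easy, since $r-1\geqslant 4$ is even and $2^{r-1}-1=(2^{(r-1)/2}-1)(2^{(r-1)/2}+1)$ already forces $n_1\geqslant 3$. For the Fermat branch ($n_1=1$), however, you must rule out every Fermat prime $p=2^{2^k}+1$ with $k\geqslant 2$, and since the finiteness of Fermat primes is open, a finite enumeration is not a proof. The fix is still elementary: for $k\geqslant 2$ one has $p+1=2(2^{2^k-1}+1)$ with exponent $2^k-1\geqslant 3$ odd, so $3\mid p+1$; showing $n_2=1$ then forces $2^{2^k-1}+1$ to be a power of $3$, and the classical (pre-Mih\u{a}ilescu) fact that $2^m+1=3^n$ has only the solution $(m,n)=(3,2)$ for $m\geqslant 3$ pins down $k=2$, i.e.\ $p=17$, which then fails because $17^2+1=2\cdot 5\cdot 29$ gives $n_4=2$. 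You clearly anticipated this issue in your final paragraph; just be aware that ``small check'' must be replaced by this short argument to make the proof complete.
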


 \begin{lemma}\label{lw}\cite{[non]}
 Let $q\geqslant 5$, $f\geqslant 2$ and $q\neq9$. If $S\leqslant G\leqslant \rm{Aut}(S)$, then $G$ has irreducible characters of degrees $(q+1)[G:G\cap \rm{PGL}_2(q)]$ and $(q-1)[G:G\cap \rm{PGL}_2(q)]$.
 \end{lemma}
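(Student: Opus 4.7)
The plan is to produce the claimed irreducible characters of $G$ via Clifford induction from $S$. Set $H := G \cap \rm{PGL}_2(q)$. Since $\rm{Aut}(S)/\rm{PGL}_2(q)$ is generated by the Frobenius automorphism $x\mapsto x^p$ and so is cyclic of order $f$, the quotient $G/H$ is cyclic of some order $d$ dividing $f$. My goal is to find an irreducible character $\chi_+\in\rm{Irr}(S)$ of degree $q+1$ and an irreducible character $\chi_-\in\rm{Irr}(S)$ of degree $q-1$ whose inertia group in $G$ equals $H$. Because $H/S$ is cyclic of order at most $2$, any $H$-invariant irreducible character of $S$ extends to $H$; hence each such $\chi_\pm$ admits an extension $\widetilde{\chi}_\pm\in\rm{Irr}(H)$, and Clifford correspondence yields that $\rm{Ind}_H^G\widetilde{\chi}_\pm$ is irreducible of degree $\chi_\pm(1)\cdot[G:H] = (q\pm 1)[G:H]$, which is exactly the conclusion.

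To produce $\chi_\pm$ I would invoke the classical parametrization of the irreducible characters of $\rm{PSL}_2(q)$ underlying Lemma \ref{chpsl}. Characters of degree $q+1$ correspond, modulo Weyl-group inversion, to non-trivial characters $\alpha$ of a split maximal torus of $S$; characters of degree $q-1$ correspond to non-trivial characters $\beta$ of a non-split maximal torus. The diagonal automorphism acts via the quotient $\rm{PGL}_2(q)/S$ on these torus characters, and a generator of the field-automorphism group acts on $\alpha,\beta$ by the natural Frobenius on the torus. Under this dictionary, being $H$-invariant and having $G$-orbit of length exactly $d$ translate into explicit and easily-tested conditions on $\alpha$ and $\beta$.

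The main obstacle is then a counting argument showing that the two conditions can be met simultaneously on each torus. The number of admissible parameters grows like $q/2$, while for every proper divisor $e$ of $d$ the number of parameters fixed by the corresponding power of Frobenius is bounded by a quantity of order $p^{f/r}$, where $r$ is the smallest prime divisor of $f/e$. Summing over the proper subgroups of $G/H$, and losing at worst a factor of $2$ to the $H$-invariance requirement, still leaves a genuine surplus whenever $q\geqslant 5$, $q\neq 9$ and $f\geqslant 2$. The excluded value $q=9$ is precisely where this parametrization degenerates, reflecting the exceptional isomorphism $\rm{PSL}_2(9)\cong A_6$ and the unusually large Schur multiplier, which is why that case must be excised from the hypothesis.
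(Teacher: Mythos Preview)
The paper does not prove this lemma; it is quoted verbatim from Lewis--White \cite{[non]} (their Lemma~2.4), so there is no in-paper argument to compare against. Your sketch is, in fact, exactly the strategy employed in \cite{[non]}: locate principal-series and cuspidal characters of $S$ whose $G$-inertia group is precisely $H=G\cap\mathrm{PGL}_2(q)$, extend them over the cyclic quotient $H/S$, and then induce. So at the level of approach you are reproducing the source, not offering an alternative.

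That said, your counting paragraph is too impressionistic to stand on its own. Two points would need tightening. First, the Weyl-group identification $\alpha\sim\alpha^{-1}$ interacts with the Frobenius action: a torus character $\alpha$ gives a Frobenius-fixed \emph{irreducible} character of $S$ when $\alpha^{p^e}\in\{\alpha,\alpha^{-1}\}$, not just when $\alpha^{p^e}=\alpha$, and the second alternative roughly doubles the fixed-point count you must subtract. Second, the diagonal automorphism does not fix every degree-$(q\pm1)$ character of $S$ when $q$ is odd; it permutes them according to whether the underlying torus character is a square, so the ``$H$-invariance for free'' step requires knowing that $H$ either equals $S$ or that you have chosen $\alpha$ in the correct coset. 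Lewis and White handle both issues by an explicit case analysis rather than a uniform asymptotic bound, and the hypothesis $q\neq 9$ enters not through the Schur multiplier (which is irrelevant here, since no central extensions appear) but because for $q=9$ the outer automorphism group is $\mathbb{Z}/2\times\mathbb{Z}/2$ rather than cyclic-by-$\langle\mathrm{diag}\rangle$, wrecking the parametrization of $G/H$. Your final sentence misattributes the obstruction.
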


Suppose $G$ is a finite group, $q\geqslant 11$
 and $G/R(G)=S$. In the sequel, we consider a given character $\theta \in \rm{Irr}(R(G))$ and try to determine $\rm{cd}(G|\theta)$ in some special cases. For this purpose, we assume that $I:=I_G(\theta)$ and $N:=I/R(G)$.

\begin{lemma}\label{Frobenius}
Suppose $N$ is a Frobenius group whose kernel is an elementary abelian $p$-group. Then one of the following cases holds:\\
\textbf{a)} $\theta$ is extendible to $I$ and $\rm{cd}(G|\theta)=\{\theta(1)[G:I],\theta(1)b\}$, for some positive integer $b$ divisible by $(q^2-1)/(2,q-1)$.\\
\textbf{b)} $\theta$ is not extendible to $I$ and all character degrees in $\rm{cd}(G|\theta)$ are divisible by $p(q+1)\theta(1)$.
\end{lemma}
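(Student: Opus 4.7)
The plan is to pin down the structure of $N=I/R(G)$ as a subgroup of $S\cong\mathrm{PSL}_2(q)$ using Dickson's theorem, then read off $\mathrm{cd}(G|\theta)$ via Clifford correspondence and Gallagher's theorem, feeding the non-extendible case into Lemma \ref{method}. Since $N$ is a Frobenius group whose kernel $E$ is elementary abelian of order a power of $p$, every non-trivial element of $E$ is unipotent in $S$; hence $E$ lies in a Sylow $p$-subgroup $U$ of $S$ and $N\leq N_S(U)$, which is a Borel subgroup $U\rtimes T$ with $T$ a cyclic torus of order $(q-1)/(2,q-1)$. Consequently $N=E\rtimes H$, where $|E|=p^k$ divides $q$ and $H$ is cyclic of order dividing $(q-1)/(2,q-1)$.

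I would then take $P$ to be the preimage of $E$ in $I$ and invoke Lemma \ref{method} on $(I,R(G),\theta,P)$: the subgroup $P/R(G)=E$ is a normal Sylow $p$-subgroup of $I/R(G)=N$, the quotient $I/P\cong H$ is cyclic, and $\theta$ is $I$-invariant by construction. The lemma produces precisely the dichotomy of the statement: either $\theta$ extends to $I$, or $p\theta(1)$ divides every degree in $\mathrm{cd}(I|\theta)$.

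In the extendible case, Gallagher's theorem gives $\mathrm{cd}(I|\theta)=\theta(1)\cdot\mathrm{cd}(N)$. Because $N$ is Frobenius with abelian kernel and cyclic (hence abelian) complement, one immediately checks $\mathrm{cd}(N)=\{1,|H|\}$: inflations from $N/E\cong H$ are linear, while each non-trivial character of $E$ has stabiliser $E$ in $N$ and induces irreducibly to degree $|H|$. Clifford correspondence then lifts this to $\mathrm{cd}(G|\theta)=\{\theta(1)[G:I],\,\theta(1)[G:I]|H|\}$, and setting $b=[G:I]|H|=|S|/|E|=(q/|E|)\cdot(q^2-1)/(2,q-1)$ makes divisibility by $(q^2-1)/(2,q-1)$ manifest. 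In the non-extendible case, Clifford gives $p\theta(1)[G:I]\mid\chi(1)$ for every $\chi\in\mathrm{Irr}(G|\theta)$; the factorisation $[G:I]=|S|/|N|=(q/|E|)\cdot\bigl((q-1)/((2,q-1)|H|)\bigr)\cdot(q+1)$ exhibits $(q+1)$ as a factor of $[G:I]$, so $p(q+1)\theta(1)\mid\chi(1)$.

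The one delicate step is the initial structural reduction: it is essential that the Frobenius complement $H$ be cyclic, since this underlies both the equality $\mathrm{cd}(N)=\{1,|H|\}$ and the cyclic-Sylow hypothesis needed to apply Lemma \ref{method}. Dickson's list rules out the more exotic possibilities for a Frobenius complement inside $\mathrm{PSL}_2(q)$, after which the argument reduces to routine Clifford/Gallagher bookkeeping combined with the order computation for $S$ above.
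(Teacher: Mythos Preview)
Your proof is correct and follows essentially the same route as the paper's: identify the normal Sylow $p$-subgroup of $N$, apply Lemma~\ref{method} to split into the two cases, use Gallagher in the extendible case and Clifford induction in both, and read off the divisibility of $b=[G:Q]$ and of $[G:I]$ by $(q^2-1)/(2,q-1)$ and $q+1$ respectively from the order of $S$. The only difference is that you spell out, via Dickson's list, why $N$ sits inside a Borel subgroup and hence why the complement $H$ is cyclic; the paper takes this structural input for granted and proceeds directly to the character-theoretic computation.
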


\begin{proof}
Let $Q/R(G)$ be the $p$-Sylow subgroup of $N$. If $\theta$ extends to $Q$, then $\theta$ extends to $I$ and by Gallagher's Theorem, $\rm{cd}(I|\theta)=\{\theta(1), \theta(1)[I:Q]\}$. Hence using Clifford's Theorem, $\rm{cd}(G|\theta)=\{\theta(1)[G:I], \theta(1)[G:Q]\}$. Therefore as $(q^2-1)/(2,q-1)$ divides $[G:Q]$, $\rm{cd}(G|\theta)=\{\theta(1)[G:I], \theta(1)b\}$, where $b$ is a positive integer divisible by $(q^2-1)/(2,q-1)$. Thus we assume that $\theta$ is not extendible to $Q$. By Lemma \ref{method}, all character degrees in  $\rm{cd}(I|\theta)$ are divisible by $p\theta(1)$. Therefore by Clifford's Theorem, all character degrees in $\rm{cd}(G|\theta)$ are divisible by $\theta(1)p(q+1)$.
\end{proof}

\begin{lemma}\label{special}
Suppose $N\cong \rm{PSL}_2(p^m)$, where $m \neq f$ is a positive divisor of $f$, $p^m\neq 9$ and $p^m\geqslant 7$. Then $\theta(1)(p^m-1)[G:I],\theta(1)(p^m+1)[G:I]\in \rm{cd}(G|\theta) $, where $[G:I]=p^{f-m}(p^{2f}-1)/(p^{2m}-1)$.
\end{lemma}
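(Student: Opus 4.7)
The strategy will be threefold: (i) compute $[G:I]$ using Dickson's classification; (ii) construct characters of $I$ lying over $\theta$ of the desired degrees via the character-triple machinery; (iii) induce to $G$ via Clifford's theorem. For (i), since $N = I/R(G) \cong \rm{PSL}_2(p^m)$ sits inside $S = G/R(G) \cong \rm{PSL}_2(p^f)$ as a subfield subgroup (Hauptsatz II.8.27 of \cite{hup}), we have $[G:I] = [S:N] = |\rm{PSL}_2(p^f)|/|\rm{PSL}_2(p^m)|$. Writing $|\rm{PSL}_2(p^k)| = p^k(p^{2k}-1)/d_k$ with $d_k = \gcd(2,p^k-1)$, the factors $d_f$ and $d_m$ agree in both characteristics and cancel, giving the claimed $[G:I] = p^{f-m}(p^{2f}-1)/(p^{2m}-1)$.

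For (ii), I would invoke the standard theory of character triples (Chapter 11 of \cite{[isa]}): since $\theta \in \rm{Irr}(R(G))$ is $I$-invariant, there is a cocycle $\alpha \in H^2(N,\mathbb{C}^\times)$ such that $\rm{Irr}(I|\theta)$ is in bijection with the irreducible projective representations of $N$ carrying cocycle $\alpha$, the bijection multiplying degrees by $\theta(1)$. Equivalently, such projective representations correspond to the irreducible characters of the Schur cover $\widehat{N}$ lying above a fixed linear character of $Z(\widehat{N})$. The hypotheses $p^m \geq 7$ and $p^m \neq 9$ pin down the Schur multiplier of $N$ to order $1$ or $2$, so $\widehat{N}$ is either $N$ itself (when $p=2$) or $\rm{SL}_2(p^m)$ (when $p$ is odd), with $|Z(\widehat{N})| \leq 2$.

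It then suffices to verify that, regardless of which linear character of $Z(\widehat{N})$ the character $\theta$ forces, $\widehat{N}$ has irreducible characters of both degrees $p^m - 1$ and $p^m + 1$ lying above it. For $p = 2$ this is immediate from Lemma \ref{chpsl}. For $p$ odd, one inspects the character table of $\rm{SL}_2(p^m)$: the principal series (of degree $p^m+1$) and the discrete series (of degree $p^m-1$) each contain characters both trivial and non-trivial on $Z(\rm{SL}_2(p^m))$, depending on the value at $-I$ of the inducing torus character. This yields $\psi,\psi' \in \rm{Irr}(I|\theta)$ of degrees $\theta(1)(p^m-1)$ and $\theta(1)(p^m+1)$, and step (iii) is then immediate: Clifford's theorem induces $\psi,\psi'$ to $\psi^G,(\psi')^G \in \rm{Irr}(G|\theta)$ with degrees multiplied by $[G:I]$. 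The main obstacle is step (ii) in the odd case, where one must confirm that both degrees $p^m \pm 1$ appear above each linear central character of $\rm{SL}_2(p^m)$; the exclusion $p^m \neq 9$ is exactly what rules out the exceptional multiplier $\mathbb{Z}/6$ and keeps the Schur cover equal to $\rm{SL}_2(p^m)$.
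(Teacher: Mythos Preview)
Your proposal is correct and follows essentially the same route as the paper. The paper's own proof is a one-line sketch --- ``Using Clifford's Theorem and this fact that $\mathrm{SL}_2(p^m)$ is the Schur representation group of $N$, we have nothing to prove'' --- and your three steps (index computation, character-triple reduction to the Schur cover, Clifford induction) are exactly the content that this sentence is compressing. Your added care in checking that both degrees $p^m\pm 1$ occur above \emph{each} linear central character of $\mathrm{SL}_2(p^m)$, and in noting why the hypotheses $p^m\geqslant 7$, $p^m\neq 9$ are needed, fills in what the paper leaves implicit.
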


\begin{proof}
Using Clifford's Theorem and this fact that $\rm{SL}_2(p^m)$ is the Schur representation group of $N$, we have nothing to prove.
\end{proof}

\begin{lemma}\label{general}
Let $N\cong \rm{PGL}_2(p^m)$, where $p$ is odd, $2m$ is a positive divisor of $f$ and  $p^m\neq 3$. Then some $m_0\in \rm{cd}(G|\theta)$ is divisible by $\theta(1)p^{f-m}(p^f+1)$.
\end{lemma}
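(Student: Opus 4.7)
My plan is to adapt the Clifford-theoretic argument underlying Lemma~\ref{special} to the non-simple case $N \cong \rm{PGL}_2(p^m)$ by passing through the derived subgroup $N' \cong \rm{PSL}_2(p^m)$. Write $M$ for the preimage in $I$ of $N'$ under the quotient $I \twoheadrightarrow I/R(G) = N$; then $M \lhd I$ with $[I:M] = 2$, and
\[
[G:I] = \frac{|\rm{PSL}_2(p^f)|}{|\rm{PGL}_2(p^m)|} = \frac{p^{f-m}(p^{2f}-1)}{2(p^{2m}-1)}.
\]

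The first main step is to produce a character $\psi_0 \in \rm{Irr}(M|\theta)$ with $\psi_0(1) = \theta(1)(p^m+1)$. Applying Clifford's theorem to $R(G) \lhd M$ together with the fact that (for $p^m \neq 9$) the Schur representation group of $M/R(G) \cong \rm{PSL}_2(p^m)$ is $\rm{SL}_2(p^m)$, the degrees in $\rm{cd}(M|\theta)$ are $\theta(1)$ times the degrees of the irreducible characters of the relevant cover: $\rm{PSL}_2(p^m)$ itself if $\theta$ extends to $M$, and the faithful characters of $\rm{SL}_2(p^m)$ if not. In either case the degree $p^m+1$ is realized, since $\rm{PSL}_2(p^m)$ has principal series characters of that degree and $\rm{SL}_2(p^m)$ has faithful principal series characters of the same degree for every $p^m \geq 5$.

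Next, I would lift $\psi_0$ to $I$ and then induce to $G$. Because $[I:M]=2$ and $H^2(\mathbb{Z}/2,\mathbb{C}^*)=0$, Clifford theory for $M \lhd I$ supplies some $\psi \in \rm{Irr}(I|\theta)$ lying over $\psi_0$ with $\psi(1) \in \{\psi_0(1),\,2\psi_0(1)\}$, so in particular $\theta(1)(p^m+1) \mid \psi(1)$. Since $I = I_G(\theta)$, Clifford induction yields $\psi^G \in \rm{Irr}(G|\theta)$ with $\psi^G(1) = \psi(1)[G:I]$, making $\psi^G(1)$ a positive integer multiple of
\[
\theta(1)(p^m+1)[G:I] \;=\; \theta(1)\,p^{f-m}(p^f+1)\cdot\frac{p^f-1}{2(p^m-1)}.
\]
The cofactor $(p^f-1)/(2(p^m-1))$ is an integer: since $m \mid f$, $(p^f-1)/(p^m-1) = \sum_{i=0}^{k-1}p^{im}$ with $k=f/m$, a sum of $k$ odd integers, and the hypothesis $2m \mid f$ forces $k$ to be even, so the sum is even. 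Therefore $\theta(1)\,p^{f-m}(p^f+1) \mid \psi^G(1)$, and the choice $m_0 := \psi^G(1)$ completes the argument.

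The main obstacle I anticipate is the exceptional case $p^m = 9$: here the Schur multiplier of $\rm{PSL}_2(9) \cong A_6$ is $\mathbb{Z}/6$, so the cover is $6\cdot A_6$ and the menu of possible degrees in $\rm{cd}(M|\theta)$ is larger and more delicate. In that case I would instead exhibit a faithful character of $3\cdot A_6$ or $6\cdot A_6$ of some degree (for example $6$ or $12$) for which the arithmetic computation above still produces a multiple of $\theta(1)\,p^{f-m}(p^f+1)$; the condition $4 \mid f$ imposed by $2m \mid f$ makes each such verification routine.
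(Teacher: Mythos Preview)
Your overall strategy matches the paper's exactly: pass to the index-$2$ normal subgroup $J\lhd I$ with $J/R(G)\cong\rm{PSL}_2(p^m)$, locate a character over $\theta$ of a convenient degree, lift to $I$, and induce to $G$. The difference is only in which degree you target. The paper asks merely for some $\varphi\in\rm{Irr}(J|\theta)$ with $\varphi(1)/\theta(1)$ \emph{even}; lifting to $I$ preserves divisibility by $2\theta(1)$, and then $\chi^G(1)$ is divisible by
\[
2\theta(1)[G:I]=\theta(1)\,p^{f-m}\,\frac{p^{2f}-1}{p^{2m}-1},
\]
which is already a multiple of $\theta(1)p^{f-m}(p^f+1)$ because $2m\mid f$ forces $p^{2m}-1\mid p^f-1$. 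The even-degree requirement is checked uniformly for every central character of the Schur cover (including $p^m=9$, since each block of $6\cdot A_6$ contains an even degree), so the paper needs no case split at all.

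Your sharper choice $\psi_0(1)=\theta(1)(p^m+1)$ creates a gap you did not anticipate: for $p^m=5$ one has $\rm{PSL}_2(5)\cong A_5$ with $\rm{cd}(A_5)=\{1,3,4,5\}$, so there is no character of degree $p^m+1=6$. Hence when $\theta$ extends to $M$ and $p^m=5$ (which is permitted by the hypotheses: $p=5$, $m=1$, any even $f\geq 2$), the character $\psi_0$ you require does not exist. The repair is immediate---use degree $p^m-1=4$ instead, or simply any even degree as the paper does---but as written your argument is incomplete at this point, in addition to the $p^m=9$ case you already flagged. The paper's weaker numerical target is what buys the uniform treatment.
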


\begin{proof}
Since $N\cong \rm{PGL}_2(p^m)$, there is a normal subgroup $H$ of $N$ such that $H\cong  \rm{PSL}_2(p^m)$.
Thus for some normal subgroup $J$ of $I$, $J/R(G)=H$. It is easy to see that $\theta$ is $J$-invariant.
Therefore looking at the character table of the Schur representation group $\Gamma$ of $H$, for some $\lambda \in \rm{Irr}(Z(\Gamma))$, $\rm{cd}(J|\theta)=\{\theta(1)m|\,m\in \rm{cd}(\Gamma|\lambda)\}$.
 It is clear that for every $\lambda \in \rm{Irr}(Z(\Gamma))$, there exists $m(\lambda) \in  \rm{cd}(\Gamma|\lambda)$ such that $m(\lambda)$ is even.
 Therefore for some $\lambda_0 \in \rm{Irr}(Z(\Gamma))$, $n:=m(\lambda_0)\theta(1)\in \rm{cd}(J|\theta)$ is even. There is $\varphi \in \rm{Irr}(J|\theta)$ so that $\varphi(1)=m(\lambda_0)\theta(1)$.
 Now let $\chi \in \rm{Irr}(I|\theta)$ be a constituent of $\varphi^I$. Clearly, $\chi(1)$ is divisible by $2\theta(1)$. Hence $m_0:=\chi^G(1)\in \rm{cd}(G|\theta)$ is divisible by $2\theta(1)p^{f-m}(p^f+1)(p^f-1)/2(p^{2m}-1)$.
  Since $f$ is divisible by $2m$, we deduce that $m_0$ is divisible by $\theta(1)p^{f-m}(p^f+1)$.
\end{proof}

Now we state a useful assertion which will be required in the next sections.

\begin{lemma}\label{power}
Let $G$ be a finite group, $p$ be a prime larger than $3$, $q:=2^p$, $S\cong \rm{PSL}_2(q)$ and $G/R(G)=\rm{Aut}(S)$. Also let $\theta \in \rm{Irr}(R(G))$, $I:=I_G(\theta)$ and $N:=I/R(G)$.
 If $S\subseteq N$, then  $p(q-1)\theta(1), p(q+1)\theta(1) \in \rm{cd}(G|\theta)$.
\end{lemma}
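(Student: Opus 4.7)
The plan is to let $T$ be the preimage in $G$ of the simple subgroup $S\leqslant \rm{Aut}(S)=G/R(G)$, so $R(G)\lhd T\lhd G$, $T/R(G)\cong S$, and $[G:T]=p$. The hypothesis $S\subseteq N$ gives $I\in\{T,G\}$. In both cases I first extend $\theta$ to $T$ and then apply the Clifford correspondence.

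Since $q=2^p\geqslant 32$, the Schur multiplier of $S=\rm{PSL}_2(q)$ is trivial (as recalled at the start of Section~3), so the $T$-invariant character $\theta$ extends to some $\theta_0\in\rm{Irr}(T)$. By Gallagher's Theorem any two such extensions differ by a linear character of the simple non-abelian group $T/R(G)\cong S$; since $S$ has none, $\theta_0$ is unique. Consequently, when $I=G$, uniqueness together with the $G$-invariance of $\theta$ forces $\theta_0$ itself to be $G$-invariant. Gallagher's Theorem also identifies $\rm{Irr}(T|\theta)=\{\theta_0\widehat\beta\,:\,\beta\in \rm{Irr}(S)\}$, where $\widehat\beta$ denotes the inflation of $\beta$ to $T$ via $T/R(G)\cong S$.

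If $I=T$, pick any $\beta\in\rm{Irr}(S)$ of degree $q\pm1$ (principal or discrete series, both existing for $q=2^p\geqslant 32$). Then $\theta_0\widehat\beta\in\rm{Irr}(I|\theta)$ and the Clifford correspondence makes $(\theta_0\widehat\beta)^G\in\rm{Irr}(G|\theta)$ irreducible of degree $[G:T]\theta(1)(q\pm1)=p(q\pm1)\theta(1)$, as required. If instead $I=G$, then $\theta_0$ is $G$-invariant, so for $\psi=\theta_0\widehat\beta$ the inertia $I_G(\psi)$ is the preimage in $G$ of the stabilizer of $\beta$ under the action of $G/T\cong C_p$ on $\rm{Irr}(S)$; this $C_p$ acts as the field automorphism $\sigma\colon x\mapsto x^2$. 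Whenever $\beta$ is not $\sigma$-fixed one gets $I_G(\psi)=T$, and Clifford again yields $\psi^G\in\rm{Irr}(G|\theta)$ of degree $p(q\pm1)\theta(1)$.

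The crux is therefore to exhibit non-$\sigma$-fixed characters of $S$ at each of the degrees $q+1$ and $q-1$. Principal series characters are parameterized by non-trivial characters of $C_{q-1}$ modulo inversion and discrete series by those of $C_{q+1}$ modulo inversion; in both cases $\sigma$ acts by squaring, so a $\sigma$-fixed orbit requires $\alpha^3=1$. Since $p$ is an odd prime, $3\nmid 2^p-1$ while $3\mid 2^p+1$, so no principal series character is $\sigma$-fixed and exactly one discrete series character is. The remaining $(q-2)/2$ principal series and $q/2-1$ discrete series (both non-empty for $p\geqslant 5$) supply the required $\beta$'s of degrees $q+1$ and $q-1$ respectively. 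The main obstacle of the proof is precisely this orbit count at degree $q-1$, where one $\sigma$-fixed character does exist but is not the only one.
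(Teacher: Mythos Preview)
Your proof is correct, and in the case $N=S$ it coincides with the paper's argument. In the case $N=\rm{Aut}(S)$, however, you take a genuinely different route. The paper first uses Fermat to note $p\notin\pi(S)$ and concludes that the Schur multiplier of $\rm{Aut}(S)$ is trivial; hence the $G$-invariant character $\theta$ extends all the way to $G$, Gallagher gives $\rm{cd}(G|\theta)=\{m\theta(1):m\in\rm{cd}(\rm{Aut}(S))\}$, and then the paper simply invokes Lemma~\ref{lw} (Lewis--White) to obtain $p(q\pm1)\in\rm{cd}(\rm{Aut}(S))$. You instead stop the extension at $T$, use the uniqueness of $\theta_0$ to deduce its $G$-invariance, and then perform an explicit orbit count of the principal and discrete series of $S$ under the field automorphism to locate non-fixed characters of degrees $q\pm1$, inducing those to $G$. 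Your argument is more self-contained---in effect you are reproving the special case of Lemma~\ref{lw} needed here by hand---while the paper's is shorter because that lemma is already stated in the preliminaries. The orbit analysis (in particular the observation that $3\mid 2^p+1$ forces exactly one $\sigma$-fixed discrete series character, but $q/2-1>0$ others remain for $p\geqslant 5$) is carried out correctly.
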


\begin{proof}
If $N= S$, then using Clifford's Theorem and this fact that the Schur multiplier of $S$ is trivial, we have nothing to prove. Thus we can assume that $N=\rm{Aut}(S)$. Using Fermat's Lemma, we deduce $p\notin \pi(S)$.  Thus as the Schur multiplier of $N$ is trivial, $\theta$ is extendible to $G$ and by Gallagher's Theorem, $\rm{cd}(G|\theta)=\{m\theta(1)|\,m\in \rm{cd(Aut}(S))\}$. Therefore by lemma \ref{lw}, $p(q-1)\theta(1), p(q+1)\theta(1) \in \rm{cd}(G|\theta)$.
\end{proof}

We end this section with an interesting result. For this purpose we require the following lemmas.

\begin{lemma}\label{triangle}
Let $G$ be a finite non-abelian group such that $\Delta(G)^c$ is not bipartite. Then there exists a normal subgroup $N$ of $G/R(G)$ in which for some prime $u$ and positive integer $\alpha \geqslant 1$, $N\cong \rm{PSL}_2(u^\alpha)$. Also there exist $u^\prime \in \pi (u^\alpha-1)-\{2\}$ and $u^{''}\in \pi (u^\alpha+1)-\{2\}$ such that $\Delta(G)^c[\{u,u^\prime, u^{''}\}] $ is a triangle.
\end{lemma}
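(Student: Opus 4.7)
The plan is to apply Lemma~\ref{cycle} twice: once in the ``only if'' direction to extract the group-theoretic structure from an odd cycle in $\Delta(G)^c$, and once in the ``if'' direction on a carefully chosen triple of primes to certify that they span a triangle. Since $\Delta(G)^c$ is non-bipartite, it contains some odd cycle; let $\pi \subseteq \rho(G)$ be its vertex set, so $|\pi| \geq 3$ is odd. By Lemma~\ref{cycle}, $O^{\pi^\prime}(G) = S \times A$ with $A$ abelian and $S \cong \rm{SL}_2(u^\alpha)$ or $\rm{PSL}_2(u^\alpha)$ for some prime $u \in \pi$ and integer $\alpha \geq 1$, with the primes in $\pi - \{u\}$ alternately odd divisors of $u^\alpha - 1$ and $u^\alpha + 1$.

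To produce the required normal subgroup $N$ of $G/R(G)$, I would observe that $S$ is perfect and coincides with $[O^{\pi^\prime}(G), O^{\pi^\prime}(G)]$, so $S$ is characteristic in the normal subgroup $O^{\pi^\prime}(G)$ and hence normal in $G$. Setting $N := SR(G)/R(G)$ gives $N \cong S/(S \cap R(G))$; since $Z(S)$ is an abelian normal subgroup of $G$ and therefore lies in $R(G)$, and since $S$ is non-solvable with normal subgroups only $1$, $Z(S)$, and $S$, we obtain $S \cap R(G) = Z(S)$, and thus $N \cong \rm{PSL}_2(u^\alpha)$.

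For the triangle, since $|\pi - \{u\}| \geq 2$ is even and its elements alternate between odd divisors of $u^\alpha - 1$ and $u^\alpha + 1$, I may pick $u^\prime \in \pi \cap (\pi(u^\alpha - 1) - \{2\})$ and $u^{\prime\prime} \in \pi \cap (\pi(u^\alpha + 1) - \{2\})$. Let $T := \{u, u^\prime, u^{\prime\prime}\}$. The plan is to verify the hypotheses of the ``if'' direction of Lemma~\ref{cycle} for $T$, which will certify that $T$ is the vertex set of a cycle (necessarily a triangle) in $\Delta(G)^c$. The alternation condition on $T - \{u\}$ holds by construction. For the structural hypothesis, $T \subseteq \pi$ gives $\pi^\prime \subseteq T^\prime$ and hence $O^{T^\prime}(G) \leq O^{\pi^\prime}(G) = S \times A$, so $O^{T^\prime}(G) = (S \cap O^{T^\prime}(G)) \times A_0$ with $A_0 := A \cap O^{T^\prime}(G)$ abelian. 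For the reverse inclusion $S \leq O^{T^\prime}(G)$, I would argue that $G/O^{T^\prime}(G)$ is a $T^\prime$-group, while any non-trivial image of $S$ in this quotient would have $\rm{PSL}_2(u^\alpha)$ as a subquotient (by the normal subgroup lattice of $S$) and therefore be divisible by $u \in T$, a contradiction. Hence $O^{T^\prime}(G) = S \times A_0$, and Lemma~\ref{cycle} delivers the triangle.

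I expect the main technical point to be the bookkeeping with the normal subgroups $O^{\pi^\prime}(G)$ and $O^{T^\prime}(G)$, specifically the containment $S \leq O^{T^\prime}(G)$; once this is in place the rest is a direct invocation of the already established machinery, and the triangle $\{u, u^\prime, u^{\prime\prime}\}$ in $\Delta(G)^c$ falls out of Lemma~\ref{cycle} applied to the set $T$ of size three.
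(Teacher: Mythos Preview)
Your argument is correct but follows a genuinely different route from the paper. The paper chooses $\pi$ to be the vertex set of a \emph{shortest} odd cycle in $\Delta(G)^c$, then proves directly (by restricting an irreducible character of $G$ to $L=O^{\pi'}(G)$ and using that $G/L$ is a $\pi'$-group, via Lemma~\ref{fraction}) that $u$ is non-adjacent in $\Delta(G)$ to every vertex of $\pi-\{u\}$; minimality then forces $|\pi|=3$, so $\pi$ itself is the desired triangle. You instead take an arbitrary odd cycle, extract a triple $T=\{u,u',u''\}$ from the alternating structure, and invoke the ``if'' direction of Lemma~\ref{cycle} after checking that $O^{T'}(G)=S\times A_0$. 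The paper's approach is more self-contained (it needs only the ``only if'' half of Lemma~\ref{cycle} and elementary Clifford theory), whereas yours is slicker in that it leverages the full biconditional and avoids the minimality trick, at the cost of the bookkeeping with $O^{T'}(G)$. One small presentational point: the decomposition $O^{T'}(G)=(S\cap O^{T'}(G))\times A_0$ is not automatic for an arbitrary subgroup of $S\times A$; it is cleanest to first establish $S\leq O^{T'}(G)$ (which you do) and then observe that any $K$ with $S\leq K\leq S\times A$ satisfies $K=S\times(K\cap A)$.
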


\begin{proof}
Since  $\Delta(G)^c$ is not bipartite, there exists $\pi \subseteq \rho(G)$ such that $\pi$ is the set of vertices of an  odd cycle with minimal length of $\Delta(G)^c$. Using Lemma \ref{cycle}, $L:=O^{\pi^\prime}(G)=S\times A$, where $A$ is abelian, $S\cong \rm{SL}_2(u^\alpha)$ or $S\cong \rm{PSL}_2(u^\alpha)$ for a prime $u\in \pi$ and a positive integer $\alpha$, and the primes in $\pi - \{u\}$ are alternately odd divisors of $u^\alpha+1$ and  $u^\alpha-1$. Thus $N:=\rm{LR}(G)/R(G)\cong \rm{PSL}_2(u^\alpha)$ is a normal subgroup of $G/R(G)$. We claim that in $\Delta(G)^c$, $u$ is adjacent to all vertices in $\pi-\{u\}$. On the contrary, we assume that there exists $x\in \pi-\{u\}$ such that $x$ and $u$ are adjacent vertices in $\Delta(G)$. Then for some $\chi \in \rm{Irr}(G)$, $xu|\chi(1)$. Now let $\theta\in \rm{Irr}(L)$ be a constituent of $\chi_L$. Using Lemma \ref{fraction}, $\chi(1)/\theta(1)$ divides $[G:L]$. Thus as $G/L$ is a $\pi^\prime$-group, $xu|\theta(1)$. It is a contradiction as $x$ and $u$ are non-adjacent vertices in  $\Delta(L)$. Thus $|\pi|=3$ and we are done.
\end{proof}

\begin{lemma}\label{ote}
Let $G$ be a finite group, $R(G)< M\leqslant G$, $S:=M/R(G)$ be isomorphic to $ \rm{PSL}_2(q)$, where for some prime $p$ and positive integer $f\geqslant 1$, $q=p^f$, $|\pi(S)|\geqslant 4$ and $S\leqslant G/R(G)\leqslant \rm{Aut}(S)$. Also let $\theta\in \rm{Irr}(R(G))$, $I:=I_M(\theta)$ and $N:=I/R(G)$. If $\Delta(G)^c$ is not  bipartite,  then $N=S$.
\end{lemma}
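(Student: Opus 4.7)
The plan is to argue by contradiction. Suppose $N$ is a proper subgroup of $S$. Since $\Delta(G)^c$ is not bipartite, Lemma \ref{triangle} produces a normal subgroup $K$ of $G/R(G)$ with $K \cong \rm{PSL}_2(u^\alpha)$ together with a triangle $\{u,u',u''\}$ of $\Delta(G)^c$. Because $G/R(G)$ is almost simple with socle $S$, the simple normal subgroup $K$ must equal $S$; the hypothesis $|\pi(S)|\geq 4$ rules out the accidental isomorphism $\rm{PSL}_2(4)\cong \rm{PSL}_2(5)$ and the other small coincidences, so we may conclude $u^\alpha=p^f$, i.e.\ $u=p$ and $\alpha=f$. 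Thus no character degree of $G$ is divisible by any two of $\{p,u',u''\}$, where $u' \in \pi(q-1)\setminus\{2\}$ and $u'' \in \pi(q+1)\setminus\{2\}$.

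Next I note $R(M)=R(G)$ (because $R(M)$ is characteristic in $M \lhd G$ and solvable), so Lemmas \ref{Frobenius}, \ref{special} and \ref{general} can be applied to $M$ in place of $G$. By Clifford's theorem every $\varphi \in \rm{Irr}(M|\theta)$ has $\varphi(1)$ dividing some $\chi(1) \in \rm{cd}(G|\theta)$, and by Clifford's correspondence $[M:I]\,\theta(1)$ divides every member of $\rm{cd}(M|\theta)$. Therefore it suffices to exhibit $\varphi \in \rm{Irr}(M|\theta)$ of degree divisible by two primes in $\{p,u',u''\}$, and Dickson's classification (Hauptsatz II.8.27 of \cite{hup}) enumerates the candidate subgroups $N<S$.

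I would then run through the Dickson cases. \textbf{(i)} $N$ inside a Borel: if $N$ is a $p$-subgroup then $[S:N]$ is a multiple of $(q^2-1)/(2,q-1)$ (hence of $u'u''$); if $N$ lies in a Cartan torus then $[S:N]$ is divisible by $q(q+1)$ (giving $pu''$); if $N$ is Frobenius with elementary abelian $p$-kernel, Lemma \ref{Frobenius} supplies a degree divisible by either $(q^2-1)/(2,q-1)\,\theta(1)$ or $p(q+1)\theta(1)$. \textbf{(ii)} $N$ inside a dihedral maximal $D_{(q\pm1)/(2,q-1)}$: $[S:N]$ is then divisible by $q$ and by $q\mp 1$, producing $pu'$ or $pu''$. \textbf{(iii)} $N\in\{A_4,S_4,A_5\}$: $|N|\leq 60$ while $|\pi(S)|\geq 4$ forces $q$ large enough that $[S:N]$ absorbs all three of $p,u',u''$. \textbf{(iv)} $N\cong \rm{PSL}_2(p^m)$, $m\mid f$, $m<f$, $p^m\geq 7$, $p^m\neq 9$: Lemma \ref{special} produces $\theta(1)(p^m+1)[M:I] = \theta(1)p^{f-m}(p^f-1)(p^f+1)/(p^m-1)$ in $\rm{cd}(M|\theta)$, which is divisible by $p$ (as $m<f$) and by $u''$ (since $u''\mid p^f+1$ is coprime to $p^m-1$). \textbf{(v)} $N\cong \rm{PGL}_2(p^m)$, $2m\mid f$: Lemma \ref{general} directly gives a degree in $\rm{cd}(G|\theta)$ divisible by $\theta(1)p^{f-m}(p^f+1)$, hence by $pu''$.

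The main obstacle is the careful bookkeeping in case (iv), particularly verifying that $u'$ or $u''$ is not cancelled by cyclotomic factors in the denominator $(p^m-1)$ of the chosen character degree. A related subtlety is the exceptional embedding $\rm{PSL}_2(9)=A_6\hookrightarrow \rm{PSL}_2(3^f)$, which falls outside the hypothesis of Lemma \ref{special}; for this last subcase one reverts to a direct arithmetic check on $[S:N]=3^{f-2}(3^{2f}-1)/80$, using $f>2$ together with the distinctness of the odd primes $u'$ and $u''$ to show that $[S:N]$ contains $p=3$ along with at least one of $u',u''$, yielding the required contradiction.
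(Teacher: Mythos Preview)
Your proof follows the same overall strategy as the paper's: invoke Lemma~\ref{triangle} to obtain a triangle $\{p,u',u''\}$ in $\Delta(G)^c$ with $u'\mid q-1$, $u''\mid q+1$, and then run through Dickson's list of subgroups $N<S$, producing in each case a degree in $\mathrm{cd}(M|\theta)$ divisible by two of $\{p,u',u''\}$. Cases (i), (ii), (iv), (v), and the $\mathrm{PSL}_2(9)$ subcase are handled essentially as in the paper and are correct.

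The gap is in case (iii). Your claim that $[S:N]$ alone ``absorbs'' two (let alone three) of $p,u',u''$ is false in general. Take $q=11$: then $|\pi(S)|=|\{2,3,5,11\}|=4$, one has $u'=5\in\pi(q-1)\setminus\{2\}$ and $u''=3\in\pi(q+1)\setminus\{2\}$, and $A_5\le\mathrm{PSL}_2(11)$ with $[S:A_5]=660/60=11$, divisible only by $p=11$. (A similar failure occurs for $q=41$ with $u'=5$, $u''=3$, where $[S:A_5]=2\cdot 7\cdot 41$.) So the bare index argument does not suffice.

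The paper repairs this by using the Schur cover: since $\mathrm{SL}_2(5)$ is the representation group of $A_5$, the set $\mathrm{cd}(I|\theta)$ contains $3\theta(1)$ (if $\theta$ extends) or $6\theta(1)$ (if not), and hence $\mathrm{cd}(M|\theta)$ contains a degree divisible by $q(q^2-1)\theta(1)/(20d)$, which picks up both $p$ and one of $u',u''$. The $A_4$ and $S_4$ cases are handled similarly (with denominators $12d$ and $24d$). Your argument becomes correct once you replace the index estimate in case~(iii) by this Schur-cover computation; otherwise the approach and all remaining cases match the paper's proof.
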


\begin{proof}
Since $\Delta(G)^c$ is not  bipartite, by Lemma \ref{triangle}, there exists a normal subgroup $L$ of $G/R(G)$ in which for some prime $u$ and positive integer $\alpha$, $L\cong \rm{PSL}_2(u^\alpha)$. Also there exist $u^\prime \in \pi (u^\alpha-1)-\{2\}$ and $u^{''}\in \pi (u^\alpha+1)-\{2\}$ such that the induced subgraph of  $\Delta(G)^c$ on $\pi:=\{u,u^\prime, u^{''}\} $ is a triangle. Thus as $L$ is a non-abelian minimal normal subgroup of $G/R(G)$, $q=u^\alpha$.
 On the contrary, we assume that $N\neq S$. Then using Dickson's list, one of the following cases holds. Note that $d:=(2,q-1)$.\\
Case 1.  $N$ is an elementary abelian $p$-group. Then by Clifford's Theorem, there is $m\in \rm{cd}(M|\theta)$ such that $m$ is divisible by $\theta(1)(q^2-1)/d$. It is a contradiction as $\Delta(G)^c[\pi]$ is a triangle.\\
Case 2. $N$ is contained in a dihedral group. Using Clifford's Theorem, for some $m\in \rm{cd}(M|\theta)$ and $\epsilon \in \{\pm 1\}$,   $m$ is divisible by $\theta(1)p(q+\epsilon)/d$. It is a contradiction as $\Delta(G)^c[\pi]$ is a triangle. \\
Case 3. $N$ is a Frobenius group whose kernel is an elementary abelian $p$-group. Then using Lemma \ref{Frobenius},  there is  $m\in \rm{cd}(M|\theta)$ so that $m$ is divisible by $\theta(1)(q^2-1)/d$ or $\theta(1)p(q+1)$ and we again obtain a contradiction.\\
Case 4.  $N\cong A_5$. Then as $\rm{SL}_2(5)$ is the Schur representation group of $A_5$, there exists $m\in \rm{cd}(M|\theta)$ such that $m$ is divisible by  $q(q^2-1)\theta(1)/20d$ which is impossible as  $\Delta(G)^c[\pi]$ is a triangle.\\
Case 5. $N\cong A_4$. Then using Clifford's Theorem, all character degrees in  $ \rm{cd}(M|\theta)$ are divisible by $\theta(1)q(q^2- 1)/12d$. It is again a contradiction.\\
Case 6. $N\cong S_4$ and $16|q^2-1$. Then using Clifford's Theorem, all character degrees in  $ \rm{cd}(M|\theta)$ are divisible by $\theta(1)q(q^2- 1)/24d$  which is impossible.\\
Case 7. $N \cong \rm{PSL}_2(p^m)$, where $m\neq f$ is a positive divisor of $f$, $p^m\geqslant 7$ and $p^m\neq 9$. By Lemma \ref{special}, $b:=\theta(1)p^{f-m}(p^m+1)(p^f+1)(p^{f}-1)/(p^{2m}-1) \in \rm{cd}(M|\theta)$. Thus as $p(q+1)$ divides $b$, we again obtain a contradiction.\\
 Case 8. $N\cong \rm{PSL}_2(9)$, $p=3$ and $f\geqslant 4$ is even. Then using Clifford's Theorem, all character degrees in  $ \rm{cd}(M|\theta)$ are divisible by $3(q^2- 1)/80$.  It is a contradiction as $\Delta(G)^c[\pi]$ is a triangle.\\
 Case 9. $N\cong \rm{PGL}_2(p^m)$, where $2m$ is a positive divisor of $f$ and $p^m\neq 3$.  By Lemma \ref{general}, some $m_0\in \rm{cd}(M|\theta)$ is divisible by $\theta(1)p^{f-m}(p^{f}+1)$  which is again a contradiction.
\end{proof}

\begin{lemma}\label{direct product}
Assume that $f\geqslant 2$ is an integer, $q=2^f$,  $S\cong \rm{PSL}_2(q)$ and $G$ is a finite group such that $G/R(G)=S$. If $\Delta(G)[\pi(S)]=\Delta(S)$, then $G\cong S\times R(G)$.
\end{lemma}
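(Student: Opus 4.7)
By Lemma~\ref{chpsl}(a), $\Delta(S)$ decomposes as three cliques on $\{2\}$, $\pi(q-1)$ and $\pi(q+1)$, so $\Delta(S)^c$ is the complete tripartite graph on these non-empty parts and carries a triangle. The hypothesis $\Delta(G)[\pi(S)]=\Delta(S)$ places that triangle inside $\Delta(G)^c$, which is therefore not bipartite. My first move will be to apply Lemma~\ref{ote} with $M=G$ to any $\theta\in \mathrm{Irr}(R(G))$: this forces $I_G(\theta)/R(G)=S$ whenever $|\pi(S)|\geq 4$, and the residual $|\pi(S)|=3$ cases (small $f$) should follow by the same Dickson-list analysis that proves Lemma~\ref{ote}. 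For $f\geq 3$ the Schur multiplier of $\mathrm{PSL}_2(2^f)$ is trivial, so any $G$-invariant $\theta$ automatically extends to some $\widehat\theta\in \mathrm{Irr}(G)$; for $f=2$ (so $S\cong A_5$, with Schur multiplier $C_2$) I will argue separately that a non-splitting cover via $\mathrm{SL}_2(5)$ would force a degree like $6\theta(1)$ into $\mathrm{cd}(G\mid\theta)$, creating an edge between the components $\{2\}$ and $\{3\}$ of $\Delta(S)$ inside $\Delta(G)$, contradicting the hypothesis; hence $\theta$ must extend in this case too.

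Gallagher's theorem then gives $\mathrm{cd}(G\mid\theta)=\{\theta(1)\chi(1):\chi\in \mathrm{Irr}(S)\}$. Since $q, q\pm 1 \in \mathrm{cd}(S)$, if any prime $p_0\in \pi(S)$ divided $\theta(1)$ then one of the three products $\theta(1)q,\theta(1)(q-1),\theta(1)(q+1)$ would create an edge in $\Delta(G)$ between $p_0$ and a prime in a different component of $\Delta(S)$, contradicting $\Delta(G)[\pi(S)]=\Delta(S)$; hence $\pi(\theta(1))\cap \pi(S)=\emptyset$ for every $\theta\in \mathrm{Irr}(R(G))$. Next, the class-function property $\widehat\theta(gng^{-1})=\widehat\theta(n)$ gives $\theta(gng^{-1})=\theta(n)$ for every $g\in G$, $n\in R(G)$, and since irreducible characters separate $R(G)$-conjugacy classes, $gng^{-1}$ is $R(G)$-conjugate to $n$, whence $G=R(G)C_G(n)$ for every $n$. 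Combined with the abelian normal Hall $\pi(S)$-subgroup $K\lhd R(G)$ supplied by Ito--Michler (using the degree constraint just derived), this will promote to $G=R(G)C_G(R(G))$. Setting $C=C_G(R(G))$, one has $C\cap R(G)=Z(R(G))\leq Z(G)$ and $C/Z(R(G))\cong S$, a central extension that splits by triviality of the Schur multiplier (with the $f=2$ workaround above). The complement $H\cong S\leq C$ centralizes $R(G)$ and meets it trivially (as $Z(H)=1$), giving $G=R(G)\cdot H=R(G)\times H\cong S\times R(G)$.

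The main obstacle will be the promotion from $G=R(G)C_G(n)$ for each $n$ to $G=R(G)C_G(R(G))$: Brauer's permutation lemma alone only produces class-preserving automorphisms of $R(G)$ coming from $G$, and both the degree coprimality $\pi(\theta(1))\cap\pi(S)=\emptyset$ and the abelian Hall structure $K$ will be essential to close that gap.
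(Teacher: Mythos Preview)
Your proposal has a genuine gap precisely where you flag it: the promotion from $G=R(G)C_G(n)$ for every $n\in R(G)$ to $G=R(G)C_G(R(G))$. What you have established is only that $G/R(G)$ acts on $R(G)$ by \emph{class-preserving} automorphisms, and class-preserving automorphisms of a finite group need not be inner. Nothing in the extra information you collect closes this in any evident way. For instance, even knowing that the Hall $\pi(S)$-subgroup $K\lhd G$ is abelian does not yield $K\leq Z(G)$: a linear character of $K$ need not arise as the restriction of an irreducible character of $R(G)$, so $G$-invariance of $\mathrm{Irr}(R(G))$ does not transfer to $G$-invariance of $\mathrm{Irr}(K)$. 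The induced map $S\to\mathrm{Out}(R(G))$ lands in the subgroup of class-preserving outer automorphisms, and simplicity of $S$ tells you only that this map is trivial \emph{or} injective; you have no mechanism to exclude the latter.

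The paper sidesteps this entirely by passing to the perfect core $H:=G^{(\infty)}$, the last term of the derived series. One has $HR(G)=G$ (since $G/R(G)$ is perfect) and $N:=H\cap R(G)=R(H)$ with $H/N\cong S$. If $N\neq 1$, choose a non-principal \emph{linear} $\lambda\in\mathrm{Irr}(N)$. For $f\geq 4$ the same invariance argument you already ran (Lemma~\ref{ote} applied to $H$, using that $\Delta(H)[\pi(S)]\subseteq\Delta(G)[\pi(S)]=\Delta(S)$ so $\Delta(H)^c$ still contains the triangle of $\Delta(S)^c$, together with the trivial Schur multiplier of $S$) shows that $\lambda$ is $H$-invariant and extends to $H$; but an extension of a linear character is linear, contradicting $H=H'$. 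For $f\in\{2,3\}$ one invokes Lemma~\ref{good} instead: either some $\chi\in\mathrm{Irr}(H\mid\lambda)$ has $\chi(1)$ divisible by two primes of $\pi(S)$, producing a forbidden edge in $\Delta(G)[\pi(S)]$, or $\lambda$ extends to $H$, again contradicting perfectness. Hence $N=1$, so $H\cong S$, and since $H,R(G)\lhd G$ with $H\cap R(G)=1$ and $HR(G)=G$, one obtains $G\cong S\times R(G)$ immediately. This route never needs the centralizer promotion, the Ito--Michler step, or a separate $\mathrm{SL}_2(5)$ workaround for $f=2$.
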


\begin{proof}
 Let $H$ be the last term of the derived series of $G$. Since $G$ is non-solvable, $H$ is non-trivial. Let $N:=H\cap R(G)$. As $H/N \cong HR(G)/R(G)\lhd G/R(G)\cong  S$, we deduce that $H/N\cong S$. We claim $N=1$. On the contrary, we assume that $N\neq 1$. Thus there exists a non-principal linear character $\lambda \in \rm{Irr}(N)$. Now let $f=2$ or $3$. Hence $S\cong A_5$ or $\rm{PSL}_2(8)$ and $\Delta(S)$ is an empty graph. Using Lemma \ref{good}, for some $\chi \in \rm{Irr}(H|\lambda)$, $\chi(1)$ is divisible by two distinct primes in $\pi(S)$. Therefore $\Delta(G)[\pi(S)]$ is a non-empty subgraph of $\Delta(G)$. It is a contradiction. Therefore $f\geqslant 4$.
  Since $\Delta(G)[\pi(S)]=\Delta(S)$, $\Delta(H)\subseteq \Delta(G)$ is not a bipartite graph. Hence by Lemma \ref{ote} and this fact that the Schur multiplier of $S$ is trivial, $\lambda$  extends to $H$. It is a contradiction as $H$ is perfect. Hence $N=1$ and $G\cong H\times R(G) \cong S\times R(G)$.
\end{proof}

\section{The proof of Main Theorem}
$\noindent$ In this section, we wish to prove our main result.

\begin{lemma}\label{free}
Suppose $G$ is a finite group and $\Delta(G)$ is a $K_4$-free graph withe seven vertices. Then there exists a normal subgroup $R(G)<M\leqslant G$ so that $G/R(G)$ is an almost simple group with socle $S:=M/R(G)\cong \rm{PSL}_2(q)$, where $q$ is a prime power and $\Delta(G)^c[\pi(S)]$ is not a bipartite graph. Also if $\pi\subseteq \rho(G)$ is the set of vertices of an odd cycle in $\Delta(G)^c$, then $\pi\subseteq \pi (S)$.
\end{lemma}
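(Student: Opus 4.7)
The plan is to reduce, in three moves, to an almost simple quotient with a $\rm{PSL}_2$ socle and then bootstrap the odd-cycle containment from the almost simplicity.

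First, I would argue that $\Delta(G)^c$ cannot be bipartite. Indeed, a bipartite graph on seven vertices admits, by pigeonhole, an independent set of size at least four; but an independent set of size four in $\Delta(G)^c$ is a $K_4$ in $\Delta(G)$, which is excluded by hypothesis. Hence $\Delta(G)^c$ is non-bipartite, so Lemma \ref{triangle} applies and produces a normal subgroup $N$ of $G/R(G)$ with $N \cong \rm{PSL}_2(u^\alpha)$ for some prime $u$ and integer $\alpha \geqslant 1$, together with primes $u' \in \pi(u^\alpha-1)-\{2\}$ and $u'' \in \pi(u^\alpha+1)-\{2\}$ such that $\{u,u',u''\}$ is a triangle in $\Delta(G)^c$. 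Taking $M$ to be the full preimage of $N$ under $G \to G/R(G)$ gives $R(G) < M \leqslant G$ with $S := M/R(G) = N \cong \rm{PSL}_2(q)$, $q = u^\alpha$. The triangle $\{u,u',u''\} \subseteq \pi(S)$ already certifies that $\Delta(G)^c[\pi(S)]$ is not bipartite.

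Next, setting $\bar G := G/R(G)$, I need to verify that $\bar G$ is almost simple with socle $N$, i.e.\ that $C := C_{\bar G}(N)$ is trivial. Because $R(G)$ is the solvable radical of $G$, the group $\bar G$ has trivial solvable radical, and since the solvable radical of $C$ is characteristic in $C$ and $C \lhd \bar G$, the same holds for $C$. If $C \neq 1$, then $C$ would contain a non-abelian simple group $T$ that is normal in $\bar G$ (take a direct factor of $\rm{soc}(C)$, which is characteristic in $C$), and $N \cap T = 1$ yields $N \times T \lhd \bar G$. By Ito--Michler, for each $r \in \pi(N)$ one has $\chi_N \in \rm{Irr}(N)$ with $r \mid \chi_N(1)$, and similarly $\chi_T \in \rm{Irr}(T)$ with $p \mid \chi_T(1)$ for each $p \in \pi(T)$. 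The outer product $\chi_N \chi_T \in \rm{Irr}(N \times T)$ then, by Clifford's Theorem applied to $N \times T \lhd \bar G$, lies below an irreducible character of $\bar G$ whose degree is divisible by $rp$; thus $\pi(N)$ is completely joined to $\pi(T)$ in $\Delta(G)$. Combining this complete bipartite adjacency with the internal clique structure of $\Delta(N)$ inside the components $\pi(u^\alpha \pm 1)$ provided by Lemma \ref{chpsl}, and, when $|\pi(T)| = 3$, with the classification in Lemma \ref{her} (for instance $\rm{PSU}_3(3)$ or $\rm{PSU}_4(2)$ already supply a $K_3$ inside $\pi(T)$), I would extract a $K_4$ in $\Delta(G)$, contradicting $K_4$-freeness. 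This forces $C = 1$, whence $N \cong \rm{Inn}(N) \trianglelefteq \bar G \hookrightarrow \rm{Aut}(N)$, i.e.\ $\bar G$ is almost simple with socle $S$.

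Finally, for the odd-cycle statement, let $\pi \subseteq \rho(G)$ be the vertex set of any odd cycle in $\Delta(G)^c$. Lemma \ref{cycle} gives $O^{\pi'}(G) = S_0 \times A$ with $A$ abelian and $S_0 \cong \rm{SL}_2(v^\beta)$ or $\rm{PSL}_2(v^\beta)$ for a prime $v \in \pi$, with all of $\pi$ contained in $\pi(\rm{PSL}_2(v^\beta))$. The image of $S_0$ in $\bar G$ is a non-trivial normal subgroup whose only non-abelian composition factor is $\rm{PSL}_2(v^\beta)$; by the almost simplicity just established, $\bar G$ has a unique non-abelian minimal normal subgroup, namely $S$, so this composition factor must equal $S$. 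Therefore $v^\beta = q$, and $\pi \subseteq \pi(\rm{PSL}_2(q)) = \pi(S)$. The main obstacle lies in the previous paragraph: when both $\Delta(N)$ and $\Delta(T)$ are sparse (as happens for $N, T \in \{A_5, \rm{PSL}_2(7), \rm{PSL}_2(8), A_6\}$), producing the required $K_4$ is delicate and requires the bipartite-complete joining of $\pi(N)$ to $\pi(T)$ to be combined with the counting constraint $|\rho(G)| = 7$ and the existing edges inside one of the two prime sets.
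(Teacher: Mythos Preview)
Your overall architecture matches the paper's: non-bipartite complement $\Rightarrow$ $\mathrm{PSL}_2$ normal section via Lemma~\ref{cycle} (the paper applies it directly rather than through Lemma~\ref{triangle}, but that is immaterial) $\Rightarrow$ kill the centraliser $C$ by manufacturing a $K_4$ from $S\times T$. Your treatment of the final odd-cycle containment is in fact more explicit than the paper's, and correct once you note that $S_0=(O^{\pi'}(G))'$ is characteristic, hence its image in $\bar G$ is a non-solvable normal subgroup and must contain the unique socle $S$.

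The genuine gap is exactly the one you flag, and the paper closes it not via internal clique structure or the constraint $|\rho(G)|=7$, but by a clean trichotomy on $F:=\pi(S)\cap\pi(T)$. The point you are missing is that every prime $r\in\pi(S)$ is adjacent to every distinct prime $p\in\pi(T)$ (your Ito--Michler product argument), so in particular $F$ itself is a clique and $|F|\leqslant 3$. If $|F|=2$, pick $a\in\pi(S)\setminus F$ and $b\in\pi(T)\setminus F$; then $\{a,b\}\cup F$ is a $K_4$. If $|F|=3$, any extra prime in $\pi(S)\cup\pi(T)$ yields a $K_4$ against $F$, so $\pi(S)=\pi(T)=F$; but then $F$ is a triangle in $\Delta(G)$, contradicting the triangle $\{u,u',u''\}\subseteq\pi(S)$ in $\Delta(G)^c$. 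If $|F|=1$ then $F=\{2\}$, and since $3\in\pi(S)$ for every $\mathrm{PSL}_2(q)$ one gets $3\nmid|T|$; hence $T$ is a Suzuki group, forcing $|\pi(T)|\geqslant 4$, and a $K_4$ follows. Note that your ``sparse'' worry ($N,T\in\{A_5,A_6,\mathrm{PSL}_2(7),\mathrm{PSL}_2(8)\}$) is a non-issue in this framework: all those groups have $\{2,3\}\subseteq\pi$, so $|F|\geqslant 2$ and the first two cases dispose of them immediately without any appeal to $|\rho(G)|=7$ or to Lemma~\ref{her}.
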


\begin{proof}
As $\Delta(G)$ is a $K_4$-free graph with seven vertices, $\Delta(G)^c$ is not bipartite. Now assume that $\pi\subseteq \rho(G)$ is the set of vertices of an odd cycle in $\Delta(G)^c$. By Lemma \ref{cycle}, $N:=O^{\pi^\prime}(G)=R\times A$, where $A$ is abelian, $R\cong \rm{SL}_2(q)$ or $R\cong \rm{PSL}_2(q)$ for a prime power $q$ and $\pi \subseteq \pi(\rm{PSL}_2(q))$. Let $M:=NR(G)$. Then $S:=M/R(G)\cong N/R(N)\cong \rm{PSL}_2(q)$ is a non-abelian minimal normal subgroup of $G/R(G)$. Note that $\pi\subseteq \pi(S)$ and $\Delta(G)^c[\pi(S)]$ is not a bipartite graph.

Let $C/R(G)=C_{G/R(G)}(M/R(G))$. We claim that $C=R(G)$ and $G/R(G)$ is an almost simple group with socle $S=M/R(G)$. Suppose on the contrary that $C\neq R(G)$ and let $L/R(G)$ be a chief factor of $G$ with $L\leqslant C$. Then $L/R(G)\cong T^k$, for some non-abelian simple group $T$ and some integer $k\geqslant 1$.
 As $L\leqslant C$, $LM/R(G)\cong L/R(G)\times M/R(G) \cong S\times T^k$. Let $F:=\pi (S)\cap \pi(T)$. Since $\Delta(G)$ is $K_4$-free, $|F|\leqslant 3$. Note that $|\rho(S)|,|\rho(T)|\geqslant 3$ and $2\in F$. Now one of the following cases occurs:\\
 Case 1. $|F|=1$. Then $F=\{2\}$. Hence $3$ does not divide $|T|$. Therefore by \cite{suzuki},  $T$ is a Suzuki simple group. By Lemma \ref{her}, $|\pi(T)|\geqslant 4$. Thus $\Delta(S\times T^k)$ contains a copy of $K_4$ and it is a contradiction.\\
Case 2. $|F|=2$. Then as $|\pi(S)|,|\pi(T)|\geqslant 3$, we again deduce that $\Delta(S\times T^k)$ contains a copy of $K_4$ and it is a contradiction.\\
Case 3. $|F|= 3$. If $|\pi(S)|$ or $|\pi(T)|$ is larger than $3$, then $\Delta(S\times T^k)$ contains a copy of $K_4$ which is a contradiction. Thus $\rho(S)=\rho(T)=F$. Hence $\Delta(S\times T^k)$ is a triangle. It is a contradiction as $\Delta(G)^c[\pi(S)]$ is not a bipartite graph. It completes the proof.
\end{proof}

 Let $G$ be a finite group and $\Delta(G)$ be a $K_4$-free graph with seven vertices. By Lemma \ref{free}, there exists a normal subgroup $R(G)<M\leqslant G$ so that $G/R(G)$ is an almost simple group with socle $S:=M/R(G)\cong \rm{PSL}_2(q)$ where $q$ is a prime power and $\Delta(G)^c[\pi(S)]$ is not a bipartite graph. Also if $\pi\subseteq \rho(G)$ is the set of vertices of an odd cycle in $\Delta(G)^c$, then $\pi\subseteq \pi (S)$.
 Since $\Delta(G)$ is $K_4$-free, $\Delta(S)$ is too. The structure of $\Delta(S)$ is determined by  Lemma \ref{chpsl}. Note that as $\Delta(G)^c[\pi(S)]$ is not a bipartite graph, $\Delta(S)\ncong K_2\cup K_1, K_3\cup K_1$. In the sequel of this paper, we use the notation $\pi_R$ for the set $\rho(R(G))-\pi(G/R(G))$. 

\begin{lemma}\label{easy}
Let $|\pi_R|\geqslant 3$ and $|\pi(S)|\geqslant 4$. Then $G$ does not exist.
\end{lemma}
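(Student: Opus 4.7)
The plan is short and essentially mechanical once the vertex count is pinned down. First I would observe that $\pi_R$ is disjoint from $\pi(G/R(G))$ by its very definition, and hence in particular from $\pi(S)\subseteq\pi(G/R(G))$; both sets sit inside $\rho(G)$. Since $\Delta(G)$ has exactly seven vertices, the hypotheses $|\pi_R|\geqslant 3$ and $|\pi(S)|\geqslant 4$ collapse to $|\pi_R|=3$, $|\pi(S)|=4$ and $\rho(G)=\pi_R\cup \pi(S)$ (a disjoint union). In particular every prime in $\pi_R$ is distinct from every prime in $\pi(S)$.

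Next I would apply Palfy's condition (Lemma \ref{pal}) to the solvable group $R(G)$ with the three-element subset $\pi_R\subseteq \rho(R(G))$: this yields two distinct primes $u,v\in \pi_R$ together with a character $\theta\in \rm{Irr}(R(G))$ such that $uv\mid \theta(1)$. Because $|\pi(S)|=4$, the simple group $S$ is isomorphic to neither $A_5$ nor $\rm{PSL}_2(8)$ (both of which have only three prime divisors). Lemma \ref{good}, applied to the normal subgroup $R(G)\lhd M$ with quotient $S$ and to the character $\theta$, therefore provides some $\chi\in \rm{Irr}(M\mid \theta)$ whose degree satisfies that $\chi(1)/\theta(1)$ is divisible by two distinct primes $r,s\in \pi(S)$.

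Finally, I would lift $\chi$ to $G$: choosing any $\chi'\in \rm{Irr}(G\mid \chi)$, the Clifford correspondence applied to $M\lhd G$ forces $\chi(1)\mid \chi'(1)$, and combining the two divisibilities gives $uvrs\mid \chi'(1)$. Since $u,v,r,s$ are four pairwise distinct primes in $\rho(G)$ and the product of any two of them divides the single character degree $\chi'(1)$, the set $\{u,v,r,s\}$ induces a copy of $K_4$ in $\Delta(G)$, contradicting the hypothesis that $\Delta(G)$ is $K_4$-free. There is essentially no obstacle in the argument; the only routine technical point is the standard Clifford divisibility $\chi(1)\mid \chi'(1)$ for $\chi'\in \rm{Irr}(G\mid \chi)$ with $M\lhd G$, which ensures the prime-power information produced at the level of $M$ really lives inside a character degree of $G$.
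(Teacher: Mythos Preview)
Your proof is correct and follows essentially the same route as the paper's: apply Palfy's condition to $\pi_R\subseteq\rho(R(G))$ to get $\theta$ with two primes from $\pi_R$ in its degree, then Lemma~\ref{good} to pick up two further primes from $\pi(S)$, yielding a $K_4$. Your argument is in fact a bit more careful than the paper's, which stops at $|\pi(\chi(1))|\geqslant 4$ for $\chi\in\mathrm{Irr}(M)$ and declares this ``impossible'' without spelling out the lift to $G$; your explicit Clifford step $\chi(1)\mid\chi'(1)$ for $\chi'\in\mathrm{Irr}(G\mid\chi)$ fills that small gap (and your vertex-count observation $|\pi_R|=3$, $|\pi(S)|=4$ is harmless though not strictly needed).
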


\begin{proof}
By Lemma \ref{pal}, there are $p,q \in \pi_R$ so that $p$ and $q$ are adjacent vertices in $\Delta(R(G))\subseteq \Delta(G)$. There is $\theta \in \rm{Irr}(R(G))$ with $pq|\theta(1)$. Therefore using Lemma \ref{good}, for some $\chi \in \rm{Irr}(M|\theta)$, $\chi(1)/ \theta(1)$ is divisible by at least two distinct prime divisors of $|S|$. Thus $|\pi(\chi(1))|\geqslant 4$ which is impossible.
\end{proof}

\begin{lemma}\label{threevertex}
If $\Delta(S)$ has three vertices, then $G\cong S\times A\times B$, where\\
 \textbf{a)} $S\cong A_5$ or $\rm{PSL}_2(8)$,\\
 \textbf{b)} $A$ and $B$ are disconnected groups of disconnected Types 1 or 4. Also $|\rho(A)|=|\rho(B)|=2$, and\\
\textbf{c)} $\Delta(G)\cong C_4\star K_3^c$.
\end{lemma}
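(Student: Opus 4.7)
The plan is to pin down $S$ precisely, then force $G/R(G)=S$, peel $S$ off as a direct factor via Lemma~\ref{direct product}, and finally extract the structure of $R(G)$ from the $K_4$-freeness of $\Delta(G)$. Since $|\pi(S)|=3$, Lemma~\ref{her} restricts the possibilities among $\mathrm{PSL}_2(q)$ to $S\in\{A_5\cong\mathrm{PSL}_2(4),\mathrm{PSL}_2(7),\mathrm{PSL}_2(8),A_6\cong\mathrm{PSL}_2(9),\mathrm{PSL}_2(17)\}$. Because edges of $\Delta(S)$ survive in $\Delta(G)$, the non-bipartite condition on $\Delta(G)^c[\pi(S)]$ (which is a subgraph of $\Delta(S)^c$ on those three vertices) forces $\Delta(S)^c$ to be a triangle, so $\Delta(S)$ must be edgeless on $\pi(S)$. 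A quick scan through Lemma~\ref{chpsl} leaves only $S\cong A_5$ (components $\{2\},\{3\},\{5\}$) and $S\cong \mathrm{PSL}_2(8)$ (components $\{2\},\{3\},\{7\}$); the remaining three candidates each produce a complete connected component of size two in $\Delta(S)$ (for instance $\{2,3\}$ for $\mathrm{PSL}_2(7)$), giving a forbidden edge inside $\pi(S)$.

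If $G/R(G)$ strictly contains $S$, then $\mathrm{Out}(S)\cong \mathbb{Z}/2$ or $\mathbb{Z}/3$ respectively forces $G/R(G)=S_5$ or $\mathrm{P\Gamma L}_2(8)$. For $\mathrm{PSL}_2(8)$, Lemma~\ref{lw} applies (with $q=8\geq 5$, $f=3\geq 2$, $q\neq 9$) and produces a character degree $3(q-1)=21=3\cdot 7$ of $G/R(G)$, which lifts to $G$ and contributes the edge $\{3,7\}$ to $\Delta(G)$, killing the triangle on $\pi(S)$ in $\Delta(G)^c$. For $A_5$, Lemma~\ref{lw} is unavailable ($q=4<5$), so I fall back on the character table of $S_5$ directly: the degree $6=2\cdot 3$ in $\mathrm{cd}(S_5)$ lifts to $G$ and produces the forbidden edge $\{2,3\}$. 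Either way we contradict the triangle on $\pi(S)$, so $G/R(G)=S$.

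With $\Delta(G)[\pi(S)]$ edgeless and $G/R(G)=S\cong\mathrm{PSL}_2(2^f)$ for $f\in\{2,3\}$, Lemma~\ref{direct product} gives $G\cong S\times R(G)$, whence $\mathrm{cd}(G)=\{ab:a\in\mathrm{cd}(S),\,b\in\mathrm{cd}(R(G))\}$. I then show $\pi(S)\cap\rho(R(G))=\emptyset$: if some $p\in\pi(S)\cap\rho(R(G))$, then for any $r\in\pi(S)\setminus\{p\}$, picking $b\in\mathrm{cd}(R(G))$ with $p\mid b$ and $a\in\mathrm{cd}(S)$ with $r\mid a$ yields $pr\mid ab\in\mathrm{cd}(G)$, again ruining the triangle. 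Hence $|\rho(R(G))|=4$ and $\Delta(G)[\rho(R(G))]=\Delta(R(G))$. Any triangle inside $\Delta(R(G))$ together with any vertex of $\pi(S)$ (universally adjacent to $\rho(R(G))$ through the direct-product structure) would form a $K_4$, so $\Delta(R(G))$ is triangle-free on four vertices; Lemma~\ref{square} forces $\Delta(R(G))\cong C_4$, Lemma~\ref{square2} splits $R(G)\cong A\times B$ with $A,B$ disconnected and $|\rho(A)|=|\rho(B)|=2$ on disjoint primes, and since $2\in\pi(S)$ is excluded from $\rho(R(G))$, Lemma~\ref{type} promotes $A,B$ to disconnected Type~$1$ or~$4$. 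The identification $\Delta(G)\cong C_4\star K_3^c$ is then immediate from the three-part structure. The step I expect to require the most care is the proper-overgroup argument for $A_5$, where Lemma~\ref{lw} does not apply and one must substitute a concrete appeal to $\mathrm{cd}(S_5)$.
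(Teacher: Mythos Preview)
Your proof is correct and follows essentially the same route as the paper: identify $S$ via Lemma~\ref{her} and the emptiness of $\Delta(G)[\pi(S)]$, rule out proper overgroups of $S$, apply Lemma~\ref{direct product}, and then analyse $\Delta(R(G))$ through Lemmas~\ref{square}, \ref{square2}, and~\ref{type}. Your treatment of the step $G=M$ is in fact more explicit than the paper's (which compresses it into the single clause ``using Lemma~\ref{her}, we can see that $G=M$''), correctly noting that Lemma~\ref{lw} does not cover $q=4$ and substituting a direct appeal to $\mathrm{cd}(S_5)$.
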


\begin{proof}
Since $\Delta(G)^c[\pi(S)]$ is not a bipartite graph, $\Delta(G)[\pi(S)]$ is an empty graph. Thus using Lemma \ref{her}, we can see that $G=M$ and $S\cong A_5$ or $\rm{PSL}_2(8)$. Hence by Lemma \ref{direct product},  $G\cong  S\times R(G)$. Thus as $\Delta(G)[\pi(S)]$ is an empty graph, $\pi(S)\cap \rho(R(G))=\emptyset$ and $\rho(R(G))=\pi_R$. Also as $\Delta(G)$ is $K_4$-free, $\Delta(R(G))$ is triangle free. Hence as $|\rho(G)|=7$, using Lemma \ref{square}, $\Delta(R(G))\cong C_4$. Thus using Lemma \ref{square2}, $R(G)\cong A\times B$, where $A$ and $B$ are disconnected groups with $|\rho(A)|=|\rho(B)|=2$. Therefor by Lemma \ref{type},  $A$ and $B$ are of disconnected Types 1 or 4. It is easy to see that $\Delta(G)\cong C_4\star K_3^c$ and the proof is completed.
\end{proof}

\begin{lemma}\label{one7}
 $\Delta(S)\ncong K_1\cup K_1 \cup K_2$.
\end{lemma}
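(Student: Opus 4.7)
The plan is to start from Lemma \ref{chpsl}. Since the graph $K_1\cup K_1\cup K_2$ has three connected components and part (b) of Lemma \ref{chpsl} produces only two components when $q$ is odd, I would immediately deduce $q=2^f$; the $K_2$ component is then either $\pi(q-1)$ or $\pi(q+1)$ and $|\pi(S)|=4$. Lemma \ref{interest} next restricts me to either (a) $f=4$ with $q-1=3\cdot 5$ and $q+1=17$, or (b) $f\geqslant 5$ prime with $q-1=r$ a Mersenne prime and $q+1=3t^\beta$.

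The next step is to compare $\pi(G/R(G))$ with $\pi(S)$. In case (a) we have $\pi(f)=\{2\}\subseteq \pi(S)$, and the same holds in case (b) whenever $G/R(G)=S$; in both situations $\pi(G/R(G))=\pi(S)$. Every prime in $\rho(G)\setminus \pi(S)$ is then forced into $\pi_R$ by restricting an irreducible character of $G$ to $R(G)$ and using Lemma \ref{fraction}, so $|\pi_R|\geqslant 7-4=3$, and Lemma \ref{easy} (with $|\pi(S)|=4$) yields an immediate contradiction.

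The delicate subcase is (b) with $G/R(G)=\rm{Aut}(S)$. A short Fermat-type computation shows $f\notin \pi(S)=\{2,3,r,t\}$, so $\pi(G/R(G))=\pi(S)\cup\{f\}$ and only $|\pi_R|\geqslant 2$ is guaranteed, blocking a direct application of Lemma \ref{easy}. Here I would instead exploit Lemma \ref{ote}: since $\Delta(G)^c$ is not bipartite (from the setup in Lemma \ref{free}), every $\theta\in \rm{Irr}(R(G))$ is $M$-invariant, which forces $S\subseteq I_G(\theta)/R(G)$. This activates Lemma \ref{power} (with the prime $f$), producing $f(q+1)\theta(1)=3ft^\beta\theta(1)\in \rm{cd}(G)$. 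Choosing any $u\in \pi_R$ and any $\theta\in \rm{Irr}(R(G))$ with $u\mid \theta(1)$ then yields a character degree of $G$ divisible by the four distinct primes $u,f,3,t$, producing a forbidden $K_4$ in $\Delta(G)$.

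The hard part will be this last subcase: I need to chain Lemmas \ref{ote} and \ref{power} correctly and to verify that $u,f,3,t$ are genuinely distinct. Distinctness follows from $u\in \pi_R$ being disjoint from $\pi(G/R(G))\supseteq \pi(S)\cup\{f\}$ together with the Fermat check $f\notin \pi(S)$, so the argument closes cleanly in every case.
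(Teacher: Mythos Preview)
Your proposal is correct and follows essentially the same route as the paper: the same reduction via Lemmas~\ref{chpsl} and \ref{interest} to $q=2^f$ with $f=4$ or $f\geqslant 5$ prime, the same split into the case $\pi(G/R(G))=\pi(S)$ (handled by $|\pi_R|\geqslant 3$ and Lemma~\ref{easy}) and the case $G/R(G)=\mathrm{Aut}(S)$ (handled by chaining Lemmas~\ref{ote} and \ref{power} to produce a degree with four prime divisors). Your verification that $u,f,3,t$ are pairwise distinct is slightly more explicit than the paper's, but the argument is otherwise identical.
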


\begin{proof}
By Lemmas \ref{chpsl} and \ref{interest}, either $S\cong \rm{PSL}_2(16)$ or $ \rm{PSL}_2(2^p)$, where $p\geqslant 5$ is a prime, $t:=2^p-1$ is a Mersenne prime and $|\pi(2^p+1)|=2$. Now one of the following cases occurs: \\
Case 1. $S\cong \rm{PSL}_2(16)$ or $G=M$. It is easy to see that $\pi(G/R(G))=\pi(S)$. Thus as $|\pi(S)|=4$ and $|\rho(G)|=7$, $|\pi_R|=3$. Hence by Lemma \ref{easy}, we have nothing to prove.\\
Case 2. $S\cong \rm{PSL}_2(2^P)$ and $G\neq M$. Clearly, $G/R(G)=\rm{Aut}(S)$. Thus as $|\pi(\rm{Aut}(S))|=5$ and $|\rho(G)|=7$, $\pi_R\neq \{\emptyset\}$. Let $b\in \pi_R$. There exists $\theta_b \in \rm{Irr}(R(G))$ such that $b|\theta_b(1)$. By Lemma \ref{ote}, $\theta_b$ is $M$-invariant. Thus by Lemma \ref{power}, $m:=\theta_b(1)p(2^p+1)\in \rm{cd}(G|\theta_b)$ which is a contradiction as $|\pi(m)|\geqslant 4$.
\end{proof}

\begin{lemma}\label{two7}
If $\Delta(S)\cong K_2\cup K_1 \cup K_2$, then $G\cong S\times R(G)$, where\\
\textbf{a} $S\cong \rm{PSL}_2(2^6)$, $\rm{PSL}_2(2^9)$ or $\rm{PSL}_2(2^p)$, where $p$ is prime and $|\pi(2^p\pm 1)|=2$,\\
\textbf{b)} $R(G)$ is a disconnected  group of disconnected Type 1 or 4, and\\
\textbf{c)} $\Delta(G) \cong (K_2\cup K_1\cup K_2)\star K_2^c$.
\end{lemma}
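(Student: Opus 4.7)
The plan is to first identify $S$ from the shape of $\Delta(S)$, then force $G=M$, then invoke Lemma \ref{direct product}, and finally read off the structures of $R(G)$ and $\Delta(G)$ from $K_4$-freeness. Since $\Delta(S)\cong K_2\cup K_1\cup K_2$ has three connected components, Lemma \ref{chpsl} excludes odd $q$ and forces $S\cong\rm{PSL}_2(2^f)$ with $|\pi(2^f-1)|=|\pi(2^f+1)|=2$; Lemma \ref{evenfive} then restricts $f$ to be prime or $f\in\{6,9\}$, which is conclusion (a). Note that $|\pi(S)|=5$ and $|\rho(G)|=7$, and that for $f=p$ prime we have $p\notin\pi(S)$ by Fermat's little theorem while for $f\in\{6,9\}$ we have $\pi(f)\subseteq\pi(S)$, so in the latter case $\pi(G/R(G))=\pi(S)$ whether or not $G=M$.

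Next I would show $G=M$. For $f=p$ prime, the argument of Lemma \ref{one7} applies verbatim: if $G\neq M$ then $G/R(G)=\rm{Aut}(S)$ and $|\pi_R|=1$, and for $b\in\pi_R$ with $\theta_b\in\rm{Irr}(R(G))$ satisfying $b\mid\theta_b(1)$, Lemmas \ref{ote} and \ref{power} give $p(q+1)\theta_b(1)\in\rm{cd}(G|\theta_b)$, whose prime divisors $\{b,p\}\cup\pi(q+1)$ form a $K_4$. For $f\in\{6,9\}$ with $G\neq M$, writing $k:=[G/R(G):S]\geqslant 2$, $k\mid f$, Lemma \ref{lw} inflates the degrees $(q\pm 1)k\in\rm{cd}(G)$ from $G/R(G)$; a direct check on each admissible $(f,k)\in\{(6,2),(6,3),(6,6),(9,3),(9,9)\}$ shows that at least one of $\pi((q-1)k)$, $\pi((q+1)k)$ has three elements, yielding a triangle $T\subseteq\pi(S)$ in $\Delta(G)$. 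Since $\theta_b$ is $M$-invariant (Lemma \ref{ote}) and extends to $M$ (the Schur multiplier of $S$ is trivial), Gallagher's theorem gives $\theta_b(1)m\in\rm{cd}(G)$ for each $m\in\rm{cd}(S)$, so $b$ is adjacent to every prime of $\pi(S)$; hence $T\cup\{b\}$ is a $K_4$, a contradiction.

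With $G=M$ in hand, Lemma \ref{ote} and the trivial Schur multiplier of $S$ yield $\rm{cd}(G|\theta)=\{\theta(1)m:m\in\rm{cd}(S)\}$ for every $\theta\in\rm{Irr}(R(G))$. To apply Lemma \ref{direct product} I would show $\rho(R(G))\cap\pi(S)=\emptyset$ by contradiction: if some $p\in\pi(S)$ lies in $\rho(R(G))$, then choosing $\theta$ with $p\mid\theta(1)$ and pairing against suitable $m\in\{q-1,q,q+1\}$ creates enough new edges in $\Delta(G)$ that, combined with the edges of $\Delta(S)$ and the cross-edges arising from any $b_i\in\pi_R$, assemble a $K_4$ --- for instance $u_1\in\rho(R(G))$ forces $\{u_1,v_1,v_2,b_1\}$ to be a clique. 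Hence $\Delta(G)[\pi(S)]=\Delta(S)$, and Lemma \ref{direct product} gives $G\cong S\times R(G)$, so that $\Delta(G)$ is the join $\Delta(S)\star\Delta(R(G))$.

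Finally, $\rho(R(G))=\pi_R=\{b_1,b_2\}$ with $2\notin\rho(R(G))$. A last $K_4$-check shows that $\{b_1,b_2,v_1,v_2\}$ would be a clique in the join if $b_1b_2$ were an edge of $\Delta(R(G))$; hence $\Delta(R(G))\cong K_2^c$, so $R(G)$ is disconnected and Lemma \ref{type} provides (b), while the join description delivers $\Delta(G)\cong(K_2\cup K_1\cup K_2)\star K_2^c$, which is (c). The main obstacle is step two in the cases $f\in\{6,9\}$, where Lemma \ref{power} is unavailable; the key observation that rescues the argument is that one only needs $(q\pm 1)k$ to contribute a triangle inside $\pi(S)$, since the extension of $\theta_b$ to $M$ then supplies the fourth vertex $b$ adjacent to all of $\pi(S)$.
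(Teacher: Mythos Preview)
Your proposal is correct and follows essentially the same approach as the paper: identify $S$ via Lemmas~\ref{chpsl} and~\ref{evenfive}, force $G=M$ by combining Lemma~\ref{lw} (for $f\in\{6,9\}$) and Lemma~\ref{power} (for $f=p$ prime) with the adjacency of any $b\in\pi_R$ to all of $\pi(S)$, then invoke Lemma~\ref{direct product} and finish with Lemma~\ref{type}. The only cosmetic difference is ordering: the paper first proves $\Delta(G)[\pi(S)]=\Delta(S)$ (and in doing so shows that an extra edge would force some $x\in\pi(S)$ into $\rho(R(G))$, yielding a $K_4$ on $\pi(2^f+\epsilon)\cup\{x,q\}$), then applies Lemma~\ref{direct product}, and only afterwards checks $\rho(R(G))\cap\pi(S)=\emptyset$; you instead establish $\rho(R(G))\cap\pi(S)=\emptyset$ directly and deduce $\Delta(G)[\pi(S)]=\Delta(S)$ from the description $\mathrm{cd}(G|\theta)=\{\theta(1)m:m\in\mathrm{cd}(S)\}$, which is an equivalent route.
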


\begin{proof}
By Lemmas \ref{chpsl} and \ref{evenfive}, $S$ is isomorphic to $\rm{PSL}_2(2^f)$, where  $f$ is either $6,9$ or a prime $p$ with $|\pi(2^p-1)|=|\pi(2^p+1)|=2$. Thus we can see that $|\pi(G/R(G))|=5$ or $6$. Hence as $|\rho(G)|=7$, $\pi_R$ is non-empty. Suppose  $q\in \pi_R$ and $\theta \in \rm{Irr}(R(G))$ with $q|\theta(1)$.  Using Lemma \ref{ote}, $\theta$ is $M$-invariant. Then as the Schur multiplier of $S$ is trivial, by Gallagher's Theorem, $q$ is adjacent to all vertices in $\pi(S)$.
Now let $G\neq M$. Thus $[G:M]$ is divisible by either $2,\,3$ or $p$.  By Lemma \ref{lw}, $[G:M](2^f-1)$, $[G:M](2^f+1)\in \rm{cd}(G)$. Note that $\pi_R\neq \emptyset$. Let $t\in \pi_R$. If $f=6$ or $9$, then the induced subgraph of $\Delta(G)$ on $\pi(S)\cup \{t\}$ contains a copy of $K_4$ which is impossible. Thus we assume that $f=p$. Then using Lemma \ref{power} and this fact that every $\theta  \in \rm{Irr}(R(G))$ with $t|\theta(1)$, is $M$-invariant, the induced subgraph of $\Delta(G)$ on $X:=\{p,t\} \cup \pi(2^p-1)$ is a copy of $K_4$ and  we have a contradiction. Therefore $G=M$. Since $\pi(G/R(G))=\pi(S)$, $|\rho(G)|=7$ and $|\pi(S)|=5$, there exist primes $q$ and $q^\prime$ such that $\pi_R=\{q,q^\prime\}$. Note that $q$ and $q^\prime$ are adjacent to all vertices in $\pi(S)$.  Now we claim that $\Delta(G)[\pi(S)]=\Delta(S)$. On the contrary, we assume that there are $x,y\in \pi(S)$ such that $x$ and $y$ are joined by an edge in $ E(\Delta(G))-E(\Delta(S))$. Then for some $\chi \in \rm{Irr}(G)$, $xy$ divides $\chi(1)$. Let $\tilde{\theta} \in \rm{Irr}(R(G))$ be a constituent of $\chi_{R(G)}$. Then using  Lemma \ref{ote},
 $\tilde{\theta}$ is $G$-invariant. Then as the Schur multiplier of $S$ is trivial, by Gallagher's Theorem, $\rm{cd}(G|\tilde{\theta})=\{m\tilde{\theta}(1)|\,m\in \rm{cd}(S)\}$. Hence as $\chi(1)\in \rm{cd}(G|\tilde{\theta})$, $\tilde{\theta}(1)$ is divisible by $x$ or $y$. Without loss of generality, we assume that $x$ divides $\tilde{\theta}(1)$. There is $\epsilon\in \{\pm 1\}$ so that $x\notin \pi(2^f+\epsilon)$. Thus the induced subgraph of $\Delta(G)$ on $X:=\pi(2^f+\epsilon)\cup\{x,q\}$ is a copy of $K_4$ and it is a contradiction.
Therefore $\Delta(G)[\pi(S)]=\Delta(S)$. Hence by Lemma \ref{direct product}, $G \cong S\times R(G)$. If $\rho (R(G))\cap \pi(S)\neq \emptyset$, then the induced subgraph of $\Delta(G)$ on $\pi(S)\cup \{q\}$ contains  a copy of $K_4$ and it is a contradiction. Thus as $\Delta(G)$ is $K_4$-free, it is easy to see that $\rho(R(G))=\pi_R=\{q,q^\prime\}$ and $\Delta(R(G))\cong K_2^c$. Therefore by Lemma \ref{type}, $R(G)$ is a disconnected group of disconnected Type 1 or 4. Finally, it is easy to see that, $\Delta(G)\cong \Delta(S) \star K_2^c\cong (K_2\cup K_1 \cup K_2)\star K_2^c$ and the proof is completed.
\end{proof}

\begin{lemma}\label{three7}
 $\Delta(S)\ncong K_3\cup K_1 \cup K_1$.
\end{lemma}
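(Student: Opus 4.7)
The plan is to mirror the strategy of Lemma \ref{two7}, but exploit the fact that one component of $\Delta(S)$ is itself a triangle so as to upgrade the adjacency argument into a $K_4$, contradicting the $K_4$-freeness of $\Delta(G)$.

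By Lemma \ref{chpsl}, a three-component $\Delta(S)$ forces $S\cong \rm{PSL}_2(2^f)$; writing $q=2^f$, the hypothesis $\Delta(S)\cong K_3\cup K_1\cup K_1$ means $|\pi(S)|=5$, exactly one of $\pi(q-1),\pi(q+1)$ has three primes (call this set $\pi_3$), and the other has one. In particular $f\geqslant 3$, so the Schur multiplier of $S$ is trivial, and $\Delta(G)[\pi(S)]\supseteq \Delta(S)$ already contains a $K_3$ on $\pi_3$. A short analysis of the admissible $f$ in the spirit of Lemmas \ref{interest} and \ref{evenfive} gives $|\pi(f)|\leqslant 1$ in all cases, whence $|\pi(G/R(G))|\leqslant |\pi(S)|+|\pi(f)|\leqslant 6$ and therefore $|\pi_R|=|\rho(G)|-|\pi(G/R(G))|\geqslant 1$. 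Pick $b\in\pi_R$ and $\theta\in \rm{Irr}(R(G))$ with $b\mid \theta(1)$.

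Since $|\pi(S)|=5\geqslant 4$ and $\Delta(G)^c[\pi(S)]$ is not bipartite, Lemma \ref{ote} yields $I_M(\theta)=M$, i.e.\ $\theta$ is $M$-invariant. Triviality of the Schur multiplier of $S$ lets $\theta$ extend to $M$, and Gallagher's theorem then gives $\rm{cd}(M\mid\theta)=\{m\theta(1):m\in \rm{cd}(S)\}$. Choosing $m\in\{q-1,q+1\}$ with $\pi(m)=\pi_3$ produces a character degree of $G$ of the form $N\theta(1)$ with $\pi(N)=\pi_3$. Because $b\in\pi_R\subseteq \rho(G)\setminus\pi(S)$, the prime $b$ is distinct from every $r\in\pi_3$, so $br\mid N\theta(1)$ for each $r\in\pi_3$; hence $b$ is adjacent in $\Delta(G)$ to every vertex of $\pi_3$. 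Combined with the triangle on $\pi_3$, this gives a $K_4$ on $\pi_3\cup\{b\}$ in $\Delta(G)$, contradicting the $K_4$-freeness hypothesis.

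The main obstacle I foresee is the bookkeeping needed to secure $|\pi_R|\geqslant 1$: one must verify that the admissible values of $f$ (essentially $f=8$ when $|\pi(q-1)|=3$, or primes $f$ with a suitable factorization of $2^f+1$ when $|\pi(q+1)|=3$) never allow $\rm{Out}(S)$ to contribute enough new primes to push $|\pi(G/R(G))|$ all the way to $7$. Once $|\pi_R|\geqslant 1$ is in hand, the Lemma \ref{ote}/Gallagher step closes the argument with a single character-degree computation.
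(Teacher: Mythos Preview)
Your proposal is correct, and the core idea---pick $b\in\pi_R$, take $\theta$ with $b\mid\theta(1)$, apply Lemma~\ref{ote} and Gallagher to produce a degree divisible by $b$ and all of $\pi_3$, yielding a $K_4$---is in fact more direct than the paper's argument. The paper reaches essentially the same character-degree computation, but only after first assuming a hypothetical adjacency between some $v\in\pi_R$ and $w\in\pi(q+\epsilon)$; it then uses the resulting contradiction to deduce non-adjacency, and finishes via an odd-cycle argument through Lemma~\ref{free}. Your route skips that detour entirely.

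Where your proposal is weaker is precisely the point you flag: securing $\pi_R\neq\emptyset$. You propose a Zsigmondy/Catalan-style classification of the admissible $f$ to bound $|\pi(f)|\leqslant 1$. This can be made to work, but it is laborious and not needed. The paper handles it in one line with Lemma~\ref{lw}: if some prime $x$ lies in $\pi(G/R(G))\setminus\pi(S)$, then (since $q$ is even, so $\mathrm{PGL}_2(q)=S$) there is a degree of $G/R(G)$ divisible by $x(q+\epsilon)$, which already has four prime divisors. Hence $\pi(G/R(G))=\pi(S)$ and $|\pi_R|=2$. Adopting this step would remove the only soft spot in your argument and make it strictly shorter than the paper's proof.
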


\begin{proof}
By Lemma \ref{chpsl}, for some $\epsilon \in \{\pm 1\}$, there exists a prime $t$ such that $\pi(q-\epsilon)=\{t\}$ and $|\pi(q+\epsilon)|=3$. Let $x\in \pi(G/R(G))-\pi(S)$. Then by Lemma \ref{lw}, for some $m\in \rm{cd}(G/R(G))\subseteq \rm{cd}(G)$, $m$ is divisible by $x(q+\epsilon)$. It is a contradiction as $|\pi(m)|\geqslant 4$.   Hence $\pi(G/R(G))=\pi(S)$. Therefore as $|\pi(S)|=5$ and $|\rho(G)|=7$, there exist distinct primes $x$ and $y$ such that $\pi_R=\{x,y\}$. Suppose $x$ and $y$ are adjacent vertices in $\Delta(G)$. Then there exists  $\theta\in \rm{Irr}(R(G))$ such that $xy|\theta(1)$. Thus using Lemma \ref{good}, for some $\chi \in \rm{Irr}(M|\theta)$, $|\pi(\chi(1))|\geqslant 4$ and
 we obtain a contradiction. Hence $x$ and $y$ are non-adjacent vertices in $\Delta(G)$. We claim that no prime in $\pi_R$ is adjacent to any prime in $\pi(q+\epsilon)$. On the contrary, we assume that there exist $v\in \pi_R$ and $w\in \pi(q+\epsilon)$ such that $v$ and $w$ are adjacent vertices in $\Delta(G)$. Thus for some $\chi \in \rm{Irr}(G)$, $vw|\chi(1)$. Now let $\varphi \in \rm{Irr}(M)$ and  $\theta \in \rm{Irr}(R(G))$ be  constituents of $\chi_{M}$ and $\varphi_{R(G)}$, respectively. By Lemma \ref{fraction}, $v|\theta(1)$. Using Lemma \ref{ote},
 $\theta$ is $M$-invariant. As the Schur multiplier of $S$ is trivial, by Gallaghers Theorem, $(q+\epsilon)\theta(1)\in \rm{cd}(M|\theta)$. Which is impossible as $|\pi(\theta(1)(q+\epsilon))|\geqslant 4$ .
Therefore  no prime in $\pi_R$ is adjacent to any prime in $\pi(q+\epsilon)$. Thus for every $z\in \pi(q+\epsilon)$, $\pi:=\{x,y,z\}$ is the set of vertices of an odd cycle in $\Delta(G)^c$. Hence using Lemma \ref{free}, $\pi\subseteq \pi (S)$ which is a contradiction as $\pi_R\cap \pi (S)=\emptyset$.
\end{proof}

\begin{lemma}\label{four7}
 $\Delta(S)\ncong K_2\cup K_1\cup K_3$.
\end{lemma}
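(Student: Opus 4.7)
The plan is to assume $\Delta(S)\cong K_2\cup K_1\cup K_3$ and construct an explicit $K_4$ in $\Delta(G)$. By Lemma \ref{chpsl}, this hypothesis forces $q=2^f$, and we may choose $\epsilon\in\{\pm 1\}$ so that $\pi(q-\epsilon)=\{t_1,t_2\}$ and $\pi(q+\epsilon)=\{t_3,t_4,t_5\}$; then $\pi(S)=\{2\}\cup\{t_1,t_2\}\cup\{t_3,t_4,t_5\}$ has exactly six elements. The guiding observation is that $q+\epsilon$ is already divisible by the three distinct odd primes $t_3,t_4,t_5$, so any single character degree of $G$ divisible by one additional prime automatically produces a forbidden $K_4$ on $\{*,t_3,t_4,t_5\}$.

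I first rule out $\pi(G/R(G))\supsetneq\pi(S)$. If there were $r\in\pi(G/R(G))-\pi(S)$, then $r\mid[G/R(G):S]$, and Lemma \ref{lw} applied to the almost simple group $G/R(G)$ (note that $\rm{PGL}_2(q)=\rm{PSL}_2(q)=S$ in even characteristic) would produce, via inflation to $G$, a character degree of $G$ equal to $(q+\epsilon)[G/R(G):S]$; this integer is divisible by the four distinct primes $r,t_3,t_4,t_5$, giving a $K_4$ on $\{r,t_3,t_4,t_5\}$ in $\Delta(G)$. Hence $\pi(G/R(G))=\pi(S)$, and since $|\rho(G)|=7$ we obtain $|\pi_R|=1$; write $\pi_R=\{x\}$ and pick $\theta\in\rm{Irr}(R(G))$ with $x\mid\theta(1)$.

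Next I apply Lemma \ref{ote}, available because $|\pi(S)|=6\geqslant 4$ and $\Delta(G)^c$ is not bipartite, to conclude that $\theta$ is $M$-invariant. Since $|\pi(S)|=6$ certainly forces $q\neq 4$, the Schur multiplier of $S$ is trivial, so $\theta$ extends to some $\theta_0\in\rm{Irr}(M)$, and Gallagher's theorem yields $\rm{cd}(M|\theta)=\theta(1)\cdot\rm{cd}(S)$. In particular there is $\psi\in\rm{Irr}(M|\theta)$ with $\psi(1)=\theta(1)(q+\epsilon)$. Taking any $\chi\in\rm{Irr}(G|\psi)$, Clifford's theorem gives $\chi(1)=et\,\psi(1)$ for integers $e,t\geqslant 1$, so $xt_3t_4t_5\mid\chi(1)$ and $\{x,t_3,t_4,t_5\}$ spans a $K_4$ in $\Delta(G)$, contradicting $K_4$-freeness. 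No serious obstacle is anticipated; the main discipline is to track which primes divide $\theta(1)$, $q+\epsilon$, $[G/R(G):S]$, and $\chi(1)$, and to invoke the correct packaged result at each stage, all of which is supplied by Lemmas \ref{chpsl}, \ref{lw}, and \ref{ote} together with Clifford and Gallagher.
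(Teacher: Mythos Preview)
Your proposal is correct and follows essentially the same route as the paper: identify $q=2^f$ and the partition $\pi(q-\epsilon)\cup\{2\}\cup\pi(q+\epsilon)$ via Lemma~\ref{chpsl}, use Lemma~\ref{lw} to force $\pi(G/R(G))=\pi(S)$ so that $|\pi_R|=1$, then apply Lemma~\ref{ote} together with the trivial Schur multiplier and Gallagher to produce a character degree divisible by $x(q+\epsilon)$, yielding a $K_4$. The only cosmetic difference is that you explicitly lift from $\mathrm{cd}(M|\theta)$ to $\mathrm{cd}(G)$ via Clifford, whereas the paper leaves this implicit (using that $\Delta(M)\subseteq\Delta(G)$ for $M\lhd G$).
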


\begin{proof}
Using Lemma \ref{chpsl}, there exists $\epsilon \in \{\pm 1\}$ such that  $|\pi(q-\epsilon)|=2$ and $|\pi(q+\epsilon)|=3$. Let $x\in \pi(G/R(G))-\pi(S)$.  Then by Lemma \ref{lw}, for some $m\in \rm{cd}(G/R(G))\subseteq \rm{cd}(G)$, $m$ is divisible by $x(q+\epsilon)$. It is a contradiction as $|\pi(m)|\geqslant 4$. Hence $\pi(G/R(G))=\pi(S)$. Therefore as $|\pi(S)|=6$ and $|\rho(G)|=7$, there exists a prime $x$ such that $\pi_R=\{x\}$. Thus for some $\theta \in \rm{Irr}(R(G))$, $x|\theta(1)$.  Hence using Lemma \ref{ote}, $\theta$ is $M$-invariant. Thus as the Schur multiplier of $S$ is trivial,  Gallagher's Theorem implies that  $m:=\theta(1)(q+\epsilon)\in \rm{cd}(M|\theta)$ which is a contradiction as $|\pi(m)| \geqslant 4$.
\end{proof}

\begin{lemma}\label{five7}
If $\Delta(S)\cong K_3\cup K_1\cup K_3$, then $G\cong \rm{PSL}_2(q)\times R(G)$, where $R(G)$ is abelian, for some integer $f\geqslant 1$, $q=2^f$ and $|\pi(q-1)|=|\pi(q+1)|=3$. Also $\Delta(G) \cong K_3\cup K_1\cup K_3$.
\end{lemma}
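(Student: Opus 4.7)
My plan is to follow the pattern of the preceding lemmas in this section, reducing $G$ to a direct product $S \times R(G)$ with $R(G)$ abelian and then reading off $\Delta(G)$. From Lemma \ref{chpsl}, the three-component shape $\Delta(S) \cong K_3 \cup K_1 \cup K_3$ forces $S \cong \rm{PSL}_2(q)$ with $q = 2^f$, $f \geq 2$, and $|\pi(q-1)| = |\pi(q+1)| = 3$, the isolated vertex being $2$. Then $|\pi(S)| = 7$; since $\pi(S) \subseteq \rho(G)$ and $|\rho(G)| = 7$, we obtain $\rho(G) = \pi(S)$ and $\pi_R = \emptyset$.

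The next step is to show $G = M$. If $[G/R(G):S] > 1$ and $x$ is any prime divisor of this index, Lemma \ref{lw} (applicable since $q \geq 8$ and $q \neq 9$, and $\rm{PGL}_2(q) = S$ for even $q$) provides character degrees $(q \pm 1)[G/R(G):S] \in \rm{cd}(G)$; the $K_4$-free hypothesis forces $x \in \pi(q-1) \cap \pi(q+1) = \emptyset$, a contradiction. I would then show $\Delta(G)[\pi(S)] = \Delta(S)$ by contradiction: take a putative extra edge $\{x,y\}$ between distinct components of $\Delta(S)$, pick $\chi \in \rm{Irr}(G)$ with $xy \mid \chi(1)$, and let $\theta \in \rm{Irr}(R(G))$ be a constituent of $\chi_{R(G)}$. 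By Lemma \ref{ote} (whose hypotheses hold since $|\pi(S)| = 7 \geq 4$ and $\Delta(G)^c$ is non-bipartite) $\theta$ is $G$-invariant, and since the Schur multiplier of $\rm{PSL}_2(2^f)$ is trivial here ($q \neq 4$), Gallagher's theorem yields $\rm{cd}(G \mid \theta) = \{m\theta(1) : m \in \{1, q, q-1, q+1\}\}$. A case check over the three possible placements of $\{x,y\}$ in $\{2\}, \pi(q-1), \pi(q+1)$ and the four values of $m$ shows in each case that some prime of $\{x,y\}$ must divide $\theta(1)$; then one of the degrees $(q-1)\theta(1), (q+1)\theta(1)$ contains four pairwise-adjacent primes in $\Delta(G)$, a forbidden $K_4$. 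Hence $\Delta(G)[\pi(S)] = \Delta(S)$, and Lemma \ref{direct product} gives $G \cong S \times R(G)$.

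Finally, if $R(G)$ were non-abelian, some $\theta \in \rm{Irr}(R(G))$ would satisfy $t \mid \theta(1)$ for a prime $t \in \rho(R(G)) \subseteq \pi(S)$. Since $t$ lies in at most one of $\pi(q-1), \pi(q+1)$, one of the direct-product degrees $(q-1)\theta(1), (q+1)\theta(1) \in \rm{cd}(G)$ has at least four distinct prime divisors forming a clique in $\Delta(G)$, again a $K_4$ --- contradiction. So $R(G)$ is abelian, $\rm{cd}(G) = \rm{cd}(S)$, and $\Delta(G) \cong K_3 \cup K_1 \cup K_3$. I expect the main obstacle to be the case analysis in the second paragraph that rules out extra edges inside $\pi(S)$; the other steps are quick applications of the machinery already developed in Lemmas \ref{chpsl}, \ref{lw}, \ref{ote}, and \ref{direct product}.
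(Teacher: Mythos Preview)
Your proposal is correct and follows essentially the same approach as the paper's proof: identify $S\cong\mathrm{PSL}_2(2^f)$ with $|\pi(q\pm1)|=3$ via Lemma~\ref{chpsl}, use Lemma~\ref{lw} to force $G=M$, use Lemma~\ref{ote} plus triviality of the Schur multiplier and Gallagher to show $\Delta(G)[\pi(S)]=\Delta(S)$, apply Lemma~\ref{direct product}, and finally rule out $\rho(R(G))\neq\emptyset$ by producing a $K_4$ from a degree $(q+\epsilon)\theta(1)$. Your early observation that $|\pi(S)|=7$ forces $\rho(G)=\pi(S)$ and $\pi_R=\emptyset$ is a clean addition, and your phrasing of the $G=M$ step via $\pi(q-1)\cap\pi(q+1)=\emptyset$ is equivalent to the paper's choice of $\epsilon$ with $r\nmid q+\epsilon$.
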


\begin{proof}
By Lemma \ref{chpsl}, $S\cong \rm{PSL}_2(q)$, where for some integer $f\geqslant 1$, $q=2^f$ and $|\pi(q-1)|=|\pi(q+1)|=3$. We first show that $G=M$. On the contrary, we assume that $G \neq M$. Then there is a prime $r$ so that $r$ divides $[G:M]=[G/R(G):S]$. Since $q-1$ and $q+1$ are co-prime, there is $\epsilon \in \{\pm 1\}$ with $r$ does not divide $q+\epsilon$. Using Lemma \ref{lw}, for some $m\in \rm{cd}(G)$, $m$ is divisible by $r(q+\epsilon)$.  Hence $|\pi(r(q+\epsilon))|=4$ and $\Delta(G)[\pi(m)]$ has a copy of $K_4$ which is impossible. Thus $G=M$ and $G/R(G)=S$. We claim that $\Delta(G)[\pi(S)]=\Delta(S)$. On the contrary, we assume that there exist non-adjacent vertices $x$ and $y$ of $\Delta(S)$ such that $x$ and $y$ are adjacent vertices in $\Delta(G)$. Thus for some $\chi\in \rm{Irr}(G)$, $\chi(1)$ is divisible by $xy$. Let $\theta$ be a constituent of $\chi_{R(G)}$. Using Lemma \ref{ote}, $\theta$ is $G$-invariant. Thus as the Schur multiplier of $S$ is trivial, $\theta$ extends to $G$. By Gallagher's Theorem, $\rm{cd}(G|\theta)=\{m\theta(1)|\;m\in \rm{cd}(S)\}$.  Thus as $\chi(1)\in \rm{cd}(G|\theta)$ is divisible by $xy$, $\theta(1)$ is divisible by $x$ or $y$. Hence one of the character degrees $\theta(1)(q-1)$ or $\theta(1)(q+1)$ of $G$ is divisible by four distinct primes which is a contradiction.
 Therefore $\Delta(G)[\pi(S)]=\Delta(S)$. Thus using Lemma \ref{direct product}, $G\cong S\times R(G)$.
 Now let $p\in \rho(R(G))$. There is $\epsilon \in \{\pm 1\}$ such that $p\notin \pi(q+\epsilon)$. Thus $\Delta(G)[\pi(p(q+\epsilon))]$ is a copy of $K_4$ which is a contradiction. Hence $R(G)$ is abelian and we are done.
\end{proof}

\begin{lemma}\label{six7}
 $\Delta(S)\ncong ((K_1\cup K_1)\star K_1)\cup K_1$.
\end{lemma}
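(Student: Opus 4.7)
The plan is to derive a contradiction by forcing a copy of $K_4$ into $\Delta(G)$. First I would use Lemma \ref{chpsl} to pin down $S$: the graph $((K_1\cup K_1)\star K_1)\cup K_1\cong P_3\cup K_1$ has two connected components, ruling out case (a); its non-trivial component is the path $P_3$, which is not a complete graph, ruling out (b)(i); hence we must be in case (b)(ii) with $|M|=|P|=1$. So $q$ is odd, $\pi(q-1)=\{2,r\}$ and $\pi(q+1)=\{2,s\}$ for distinct odd primes $r,s$, the characteristic $p$ is the isolated vertex of $\Delta(S)$, and $\pi(S)=\{p,2,r,s\}$. Writing $V_1=\pi(G/R(G))\setminus\pi(S)$ and $V_2=\pi_R$, we have $|V_1|+|V_2|=3$; Lemma \ref{easy} forces $|V_2|\leqslant 2$, and since $|\rm{Out}(S)|=2f$, $V_1\subseteq\pi(f)\setminus\pi(S)$, so in particular $|V_1|\geqslant 1$.

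Next I would invoke Lemma \ref{oddfour}: in its cases (a) and (c), $\pi(f)\subseteq\{2\}\subseteq\pi(S)$, contradicting $|V_1|\geqslant 1$, so only case (b) survives, giving $q=3^f$ with $f$ an odd prime. The sub-case $f=3$ also fails ($f=p\in\pi(S)$), and for $f\geqslant 5$ Fermat's little theorem yields $f\nmid 3^f\pm 1$, whence $f\notin\pi(S)$. Consequently $V_1=\{f\}$, $V_2=\{t_1,t_2\}$, and $f\mid[G/R(G):S]$. Lemma \ref{lw} then puts $(q-1)f$ and $(q+1)f$ into $\rm{cd}(G)$, so $\{2,r,f\}$ and $\{2,s,f\}$ are triangles in $\Delta(G)$.

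For each $t_i$, I would pick $\theta_i\in\rm{Irr}(R(G))$ with $t_i\mid\theta_i(1)$; Lemma \ref{ote} makes $\theta_i$ $M$-invariant, and Lemma \ref{good} (using $S\not\cong A_5,\rm{PSL}_2(8)$) supplies $\psi_i\in\rm{Irr}(M|\theta_i)$ with $\psi_i(1)/\theta_i(1)$ divisible by two distinct primes of $\pi(S)$. Since $q\equiv 3\pmod 4$, the only two-prime degrees in $\rm{cd}(\rm{SL}_2(q))$ have prime sets $\{2,r\}$ (from $q-1$) or $\{2,s\}$ (from $q+1$ and $(q+1)/2$), so $\{t_i,2\}$ and $\{t_i,u_i\}$ are edges of $\Delta(G)$ for some $u_i\in\{r,s\}$. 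The $K_4$-freeness applied to $\{t_i,u_i,2,f\}$, whose five other pairs are already edges, forces $\{t_i,f\}$ to be a non-edge of $\Delta(G)$.

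Finally, if $\{t_1,t_2\}$ were also a non-edge, then $\{t_1,t_2,f\}$ would be a triangle in $\Delta(G)^c$ with vertices outside $\pi(S)$, violating Lemma \ref{free}. Hence $t_1t_2\mid\chi(1)$ for some $\chi\in\rm{Irr}(G)$, and Lemma \ref{fraction} combined with $\pi(G/R(G))\subseteq\pi(S)\cup\{f\}$ forces $t_1t_2\mid\theta(1)$ for a constituent $\theta$ of $\chi_{R(G)}$. Applying Lemma \ref{good} once more to this $\theta$ produces $\chi'\in\rm{Irr}(M|\theta)$ with $\chi'(1)$ divisible by $t_1t_2uv$ for two distinct primes $u,v\in\pi(S)$, yielding a $K_4=\{t_1,t_2,u,v\}$ in $\Delta(G)$, the desired contradiction. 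The hardest part will be the third paragraph: combining Lemma \ref{good}, the constrained form of two-prime degrees of $\rm{SL}_2(q)$, and $K_4$-freeness to pin down each $\{t_i,f\}$ as a non-edge, and then using Lemma \ref{free} to extract the crucial edge $\{t_1,t_2\}$.
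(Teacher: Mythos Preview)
Your proof is correct, but it takes a longer route than the paper at the key step. After establishing that each $\theta_i$ is $M$-invariant (via Lemma~\ref{ote}), you invoke Lemma~\ref{good} to obtain only that $t_i$ is adjacent to $2$ and to one of $r,s$; this forces the extra case split on whether $t_1$--$t_2$ is an edge and a second application of Lemma~\ref{good} at the end. The paper instead exploits $M$-invariance directly: since $\mathrm{SL}_2(3^f)$ is the Schur cover of $S$, both $(q-1)\theta_i(1)$ and $(q+1)\theta_i(1)$ lie in $\mathrm{cd}(M|\theta_i)$, so each $t_i$ is adjacent to \emph{all} of $\{2,r,s\}$. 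Combined with your observation (via Lemma~\ref{lw}) that $f$ is also adjacent to all of $\{2,r,s\}$, $K_4$-freeness then forces $\{t_1,t_2,f\}$ to be pairwise non-adjacent in one stroke (any edge among them would complete a $K_4$ with $2$ and $r$), and Lemma~\ref{free} finishes immediately. In effect, once you know $\theta_i$ is $M$-invariant you already have $\mathrm{cd}(M|\theta_i)=\theta_i(1)\cdot\mathrm{cd}(\mathrm{SL}_2(q)|\lambda)$, so analyzing the two-prime degrees of $\mathrm{SL}_2(q)$ via Lemma~\ref{good} is weaker than simply reading off $q\pm 1$ from that list; your detour works but is avoidable.
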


\begin{proof}
Using Lemma \ref{oddfour} and Fermat's Lemma, $\pi(G/R(G))=\pi(S)$, unless $S\cong \rm{PSL}_2(3^f)$, where $f\geqslant 5$ is  an odd prime and $f|[G/R(G):S]$. We first assume that $\pi(G/R(G))=\pi(S)$. Then as $|\pi(S)|=4$ and $|\rho(G)|=7$, $|\pi_R|=3$.  Hence by Lemma \ref{easy}, we are done. Now let $S\cong \rm{PSL}_2(3^f)$ where $f\geqslant 5$ is an odd prime and $f|[G/R(G):S]$. Since $[G:M]=f$ or $2f$, $\rho(G/R(G))=\pi(S) \cup \{f\}$. Hence as $|\rho(G)|=7$, for some distinct primes $p$ and $q$, $\pi_R=\{p,q\}$. For every $b\in \pi_R$, let $\theta_b\in \rm{Irr}(R(G))$ be a
     character so that $b$ divides $\theta_b(1)$. By Lemma \ref{ote}, $\theta_p$ and $\theta_q$ are $M$-invariant. Thus using Lemma \ref{lw} and this fact that $\rm{SL}_2(3^f)$ is the Schur representation group of $S$, every prime in $\pi:=\{p,q,f\}$ is adjacent to all vertices in $\pi(3^{2f}-1)$. Hence as $\Delta(G)$ is $K_4$-free, $\Delta(G)^c[\pi]$ is a triangle. Therefore by Lemma \ref{free}, $\pi\subseteq \pi(S)$ which is impossible as $\pi_R \cap \pi(S)=\emptyset$.
\end{proof}

\begin{lemma}\label{oddPSL7}
$\Delta(S)\ncong ((K_1\cup K_2)\star K_1)\cup K_1$ or $((K_2\cup K_2)\star K_1)\cup K_1$.
\end{lemma}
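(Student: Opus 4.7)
The plan is to treat both cases uniformly. Under either hypothesis, Lemma \ref{chpsl}(b)(ii) forces $S \cong \rm{PSL}_2(q)$ with $q = p^f$ odd, and the two graph shapes correspond respectively to $\{|\pi(q-1)|,|\pi(q+1)|\} = \{2,3\}$ (so $|\pi(S)| = 5$) and $|\pi(q-1)| = |\pi(q+1)| = 3$ (so $|\pi(S)| = 6$). Since neither pattern is realised by $\pi(8),\pi(10)$, necessarily $q \neq 9$. I fix $\epsilon^* \in \{\pm 1\}$ with $|\pi(q+\epsilon^*)| = 3$.

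The first step will be to show $\pi(G/R(G)) = \pi(S)$. When $f = 1$ this is automatic because $\rm{Aut}(S) = \rm{PGL}_2(p)$ only contributes the prime $2 \in \pi(S)$. When $f \geq 2$, any odd prime $r \in \pi(G/R(G)) - \pi(S)$ would divide the index $[G/R(G):(G/R(G))\cap \rm{PGL}_2(q)]$, and Lemma \ref{lw} would then yield a character degree of $G$ divisible by $r(q+\epsilon^*)$, hence by $1+3=4$ distinct primes forming a $K_4$ in $\Delta(G)$ --- impossible. Since $|\rho(G)| = 7$, this leaves $\pi_R \neq \emptyset$ (of size $2$ or $1$ according to the case).

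Next I pick $x \in \pi_R$ and $\theta \in \rm{Irr}(R(G))$ with $x \mid \theta(1)$. Because $|\pi(S)| \geq 5 \geq 4$ and $\Delta(G)^c$ is not bipartite (Lemma \ref{free}), Lemma \ref{ote} forces $\theta$ to be $M$-invariant. Since $q\neq 9$, the Schur representation group of $S$ is $\rm{SL}_2(q)$, so arguing as in Lemma \ref{general} one has $\rm{cd}(M|\theta) = \{\theta(1)d : d \in \rm{cd}(\rm{SL}_2(q)|\lambda_0)\}$ for some $\lambda_0 \in \rm{Irr}(Z(\rm{SL}_2(q)))$. Examining the character table of $\rm{SL}_2(q)$, one sees that $q+\epsilon^* \in \rm{cd}(\rm{SL}_2(q)|\lambda_0)$ regardless of $\lambda_0$: when $\lambda_0$ is trivial the set equals $\rm{cd}(\rm{PSL}_2(q))$, and when $\lambda_0$ is non-trivial the faithful principal and discrete series supply characters of degrees $q+1$ and $q-1$ for every odd $q \geq 5$. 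Therefore $\theta(1)(q+\epsilon^*) \in \rm{cd}(G)$; its $1+3=4$ prime divisors $\{x\}\cup \pi(q+\epsilon^*)$ are pairwise adjacent, and we obtain the forbidden $K_4$.

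The main obstacle is the central-character analysis in the third step: verifying that both the trivial and the non-trivial $\lambda_0 \in \rm{Irr}(\{\pm I\})$ are realised at a representation of degree $q+\epsilon^*$. This amounts to checking that the split and non-split tori of $\rm{SL}_2(q)$ each carry a non-trivial pair $\{\alpha,\alpha^{-1}\}$ with $\alpha(-I) = -1$, which is straightforward but requires a small case-split on $q \bmod 4$ and the bound $q \geq 5$.
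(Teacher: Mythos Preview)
Your proof is correct and follows essentially the same route as the paper: identify $S\cong\mathrm{PSL}_2(q)$ with $q$ odd via Lemma~\ref{chpsl}, use Lemma~\ref{lw} to force $\pi(G/R(G))=\pi(S)$ so that $\pi_R\neq\emptyset$, then for $x\in\pi_R$ and $\theta\in\mathrm{Irr}(R(G))$ with $x\mid\theta(1)$ invoke Lemma~\ref{ote} to get $\theta$ $M$-invariant, and finally use that $\mathrm{SL}_2(q)$ is the Schur cover to obtain $\theta(1)(q+\epsilon^*)\in\mathrm{cd}(M|\theta)$, yielding a $K_4$. Your treatment is in fact slightly more careful than the paper's (you explicitly exclude $q=9$, separate the $f=1$ case where Lemma~\ref{lw} does not apply, and flag the torus-character verification needed to guarantee $q\pm 1\in\mathrm{cd}(\mathrm{SL}_2(q)\mid\lambda_0)$ for both central characters), but the argument is the same.
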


\begin{proof}
By Lemma \ref{chpsl}, $S\cong \rm{PSL}_2(q)$, where $q$ is an odd prime power. If for some prime $p$, $p\in \pi(G/R(G))-\pi(S)$, then as $\Delta(S)$ has a triangle, using Lemma \ref{lw}, we can see that $\Delta(G/R(G))\subseteq \Delta(G)$ has a copy of $K_4$ and it is a contradiction. Thus $\pi(G/R(G))=\pi(S)$ and $\pi_R\neq \emptyset$.  Hence there is a prime $p$ with $p\in \pi_R$. There is $\theta \in \rm{Irr}(R(G))$ so that $p$ divides $\theta(1)$.  By Lemma \ref{ote}, $\theta$ is $M$-invariant. Thus as $\rm{SL}_2(q)$ is the Schur representation group of $S$, $\theta(1)(q-1)$, $\theta(1)(q+1)\in \rm{cd}(M|\theta)$. There exists $\epsilon \in \{\pm 1\}$ with $|\pi(q+\epsilon)|=3$. Hence $\Delta(M)[\pi(\theta(1)(q+\epsilon))]\subseteq \Delta(G)$ contains a copy of $K_4$ and it is a contradiction.
\end{proof}

\section*{Acknowledgements}
This research was supported in part
by a grant  from School of Mathematics, Institute for Research in Fundamental Sciences (IPM).
 I would like to express
my gratitude to the referee for valuable comments which
improved the original version.
I also would like to thank Sadegh Nazardonyavi for his help during the preparation of the manuscript.


\end{document}